\tikzset{main node/.style={circle,fill=black,draw,minimum width=4pt,inner sep=0pt}}
\newtheorem{theorem}{Theorem}[section]
\newtheorem{lemma}[theorem]{Lemma}
\newtheorem{cor}[theorem]{Corollary}
\theoremstyle{definition}
\newtheorem{definition}[theorem]{Definition}
\newtheorem{example}[theorem]{Example}
\newenvironment{claim}[2]{\vspace{0.1cm}\par\noindent\emph{Claim $#1$.}\space#2}{}
\newcommand{\N}{\mathbb{N}}
\newcommand{\K}[1]{\vec{K}_{#1} }
\newcommand{\U}{\mathcal{U}}
\begin{document}
\author{Carl B\"urger}
\author{Louis DeBiasio}\thanks{The research of the second author is supported in part by Simons Foundation Collaboration Grant \# 283194.}
\author{Hannah Guggiari}
\author{Max Pitz}

\address{(B\"urger) University of Hamburg, Department of Mathematics, Bundesstra{\ss}e 55 (Geomatikum), 20146 Hamburg, Germany}
\email{carl.buerger@uni-hamburg.de}

\address{(DeBiasio) Miami University, Department of Mathematics, Oxford, OH, 45056, United States}
\email{debiasld@miamioh.edu}

\address{(Guggiari) University of Oxford, Mathematical Institute, Oxford, OX2 6GG, United Kingdom}
\email{guggiari@maths.ox.ac.uk}

\address{(Pitz) University of Hamburg, Department of Mathematics, Bundesstra{\ss}e 55 (Geomatikum), 20146 Hamburg, Germany}
\email{max.pitz@uni-hamburg.de}

\title[Partitioning symmetric digraphs into monochromatic graphs]{Partitioning edge-coloured complete symmetric digraphs into monochromatic complete subgraphs}  

\keywords{Complete symmetric digraph; monochromatic path partition, edge-colourings}
\subjclass[2010]{05C15, 05C20, 05C35, 05C63}  

\begin{abstract}
Let $\K{\N}$ be the complete symmetric digraph on the positive integers. Answering a question of DeBiasio and McKenney \cite{DM16}, we construct a $2$-colouring of the edges of $\K{\N}$ in which every monochromatic path has density~$0$. 

However, if we restrict the length of monochromatic paths in one colour, then no example as above can exist: We show that every $(r+1)$-edge-coloured complete symmetric digraph (of arbitrary infinite cardinality) containing no directed paths of edge-length $\ell_i$ for any colour $i\leq r$ can be covered by $\prod_{i\leq r} \ell_i$ pairwise disjoint monochromatic complete symmetric digraphs in colour $r+1$. 

Furthermore, we present a stability version for the countable case of the latter result: We prove that the edge-colouring is uniquely determined on a large subgraph, as soon as the upper density of monochromatic paths in colour $r+1$ is bounded by $\prod_{i\in [r]}\frac{1}{\ell_i}$.
\end{abstract}

\maketitle 

\section{Introduction}
Let $K_{\mathbb{N}}$ be the complete graph on the positive integers and $\vec{K}_{\mathbb{N}}$ be the complete symmetric digraph on the positive integers. The \textit{upper density} of a set $A\subseteq\mathbb{N}$ is
\[
\bar{d}(A)=\limsup_{n\rightarrow\infty}\frac{|A\cap\{1,\dots,n\}|}n
\]
For a graph or digraph $G$ with vertex set $V(G)\subseteq\mathbb{N}$, we define the \textit{upper density} of $G$ to be that of $V(G)$.  Throughout this paper, by a $k$-colouring, we mean a $k$-edge-colouring. In a 2-colouring, we will assume that the colours are red and blue.  Given a directed graph $D$ and sets $A, B\subseteq V(D)$, we write $[A,B]$-edges to mean all edges $(x,y)\in E(D)$ with $x\in A$ and $y\in B$.  

For finite graphs, Gerencs\'er and Gy\'arf\'as \cite{GG} proved that in every 2-colouring of $K_n$ there is a monochromatic path on at least $(2n+1)/3$ vertices; furthermore, this is best possible.  Erd\H{o}s and Galvin \cite{EG93} proved an infinite analogue, showing that in every 2-colouring of $K_{\mathbb{N}}$ there exists a monochromatic infinite path with upper density at least $\frac23$; they also give a 2-colouring of $K_{\mathbb{N}}$ in which every monochromatic path has upper density at most $\frac89$.  Recently, DeBiasio and McKenney \cite{DM16} improved the lower bound, showing that, in every 2-colouring of $K_{\mathbb{N}}$, there exists a monochromatic infinite path with upper density at least $\frac34$.  For more colours, Rado \cite{R78} showed that, in every $r$-colouring of $K_{\mathbb{N}}$, there is a partition of the vertices into at most $r$ disjoint paths of distinct colours\footnote{In fact, Rado proved that in any $r$-colouring of the edges of $\vec{K}_{\mathbb{N}}$ there is a partition of the vertices into at most $r$ disjoint \emph{anti-directed} paths of distinct colours. This implies the undirected version stated above.}, which implies that there is a monochromatic path of upper density at least $1/r$. Elekes, Soukup, Soukup and Szentmikl\`{o}ssy \cite{ESSS17} have recently extended Rado's result for two colours to the complete graph on $\aleph_1$ where $\aleph_1$ is the smallest uncountable cardinal. Shortly after, Soukup \cite{S17} extended this further to any finite number of colours and complete graphs of any infinite cardinality.

For directed graphs, the picture is a little different. In the finite case, Raynaud \cite{R73} showed that, in any 2-colouring of $\vec{K}_n$, there is a directed Hamiltonian cycle $C$ with the following property: there are two vertices $a$ and $b$ such that the directed path from $a$ to $b$ along $C$ is red and the directed path from $b$ to $a$ along $C$ is blue.  As a corollary, in any 2-colouring of $\vec{K}_n$, there is a monochromatic directed path on at least $n/2+1$ vertices; this is easily seen to be best possible by partitioning the vertices of $\vec{K}_n$ into two sets $A, B$ with $||A|-|B||\leq 1$ and colouring the edge $(x,y)$ red if $x\in A$ and blue if $x\in B$.  
In this paper, we will be interested in the infinite directed case. In particular, we will be considering edge-colourings of $\vec{K}_{\mathbb{N}}$ and prove a variety of results relating to the upper density of paths in $\vec{K}_{\mathbb{N}}$.

Let $P=v_1v_2...$ be a path in $\vec{K}_{\mathbb{N}}$. We say that $P$ is a \textit{directed path} if every edge in $P$ is oriented in the same direction. By the \textit{length} of a path, we mean the number of edges in the path. DeBiasio and McKenney \cite{DM16} recently proved the following result.

\begin{theorem}
For every $\varepsilon>0$, there exists a 2-colouring of $\vec{K}_{\mathbb{N}}$ such that every monochromatic directed path has upper density less than $\varepsilon$.
\end{theorem}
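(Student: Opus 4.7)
The plan is to construct a single $2$-colouring of $\vec{K}_{\mathbb{N}}$ in which every monochromatic directed path has upper density equal to $0$; since $0 < \varepsilon$ for every $\varepsilon > 0$, this is strictly stronger than what the theorem asks for. The whole construction rests on ``spreading out'' the vertex set via a bijection to $\mathbb{N}^2$, so that one coordinate controls each colour.

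First I would fix a bijection $\phi\colon \mathbb{N} \to \mathbb{N} \times \mathbb{N}$, concretely Cantor's pairing function, and write $\phi(v) = (a(v),b(v))$. Then I would define the colouring by the rule that $(v,w)$ is \emph{red} if $a(v) < a(w)$, and \emph{blue} otherwise (that is, if $a(v) \geq a(w)$). The single quantitative input I would establish before turning to paths is that, for every $c \in \mathbb{N}$, the row $R_c := \{v \in \mathbb{N} : a(v) = c\}$ has upper density $0$ in $\mathbb{N}$: under Cantor pairing, the element of $R_c$ with second coordinate $b$ lies at position $\sim (c+b)^2/2$, so $|R_c \cap \{1,\ldots,N\}| = O(\sqrt{N})$.

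Given this, the analysis of monochromatic directed paths splits cleanly into two parallel cases. For a red directed path $P = v_1 v_2 \cdots$, every red edge forces $a(v_i) < a(v_{i+1})$, so $(a(v_i))$ is strictly increasing in $\mathbb{N}$; hence $a(v_i) \geq i$, and using the lower bound on the first element of row $R_{a(v_i)}$ one gets $v_i \geq i(i+1)/2$, so $|P \cap \{1,\ldots,N\}| = O(\sqrt{N})$ and therefore $\bar{d}(P)=0$. For a blue directed path $P = v_1 v_2 \cdots$, the sequence $(a(v_i))$ is weakly decreasing in $\mathbb{N}$ and hence eventually constant at some value $c$; thus $V(P) \subseteq F \cup R_c$ for some finite set $F$, and by finite subadditivity of upper density $\bar{d}(V(P)) \leq \bar{d}(F) + \bar{d}(R_c) = 0$.

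The only step needing genuine verification (and so the main ``obstacle'') is the density bound for the rows $R_c$. This is a routine computation with the Cantor pairing formula, and in fact the argument goes through for any bijection $\mathbb{N} \to \mathbb{N}^2$ enumerating the plane along finite antidiagonals. Once that input is in place, the dichotomy ``$a$ strictly increases along red paths'' versus ``$a$ is eventually constant along blue paths'' makes both density conclusions immediate.
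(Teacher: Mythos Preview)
Your construction and argument treat only \emph{forward}-directed paths, but the paper's definition of a directed path (``every edge in $P$ is oriented in the same direction'') also covers the backward-directed case, and this is where your colouring breaks down. The culprit is the tie case $a(v)=a(w)$: then both $(v,w)$ and $(w,v)$ are blue, so each row $R_c$ induces a blue complete symmetric sub-digraph. Hence one can build a blue \emph{backward}-directed path $P=v_1v_2\cdots$ (all edges $(v_{i+1},v_i)$ blue, equivalently $a(v_{i+1})\ge a(v_i)$) by sweeping through the rows in increasing order of $a$, lingering arbitrarily long in each before moving on. Concretely, list the set $\{(a,b): 0\le b\le 2^a\}$ row by row; along this sequence $a$ is weakly increasing, so every backward edge is blue, yet a routine count with the Cantor pairing shows the vertex set has upper density~$1$. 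Thus your colouring does not even give the $\varepsilon$-statement, let alone density~$0$.

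The paper's proof (of the stronger density-$0$ theorem, which immediately implies the stated $\varepsilon$-version) uses a $2$-adic colouring with the built-in symmetry that $(m,n)$ is red iff $(n,m)$ is blue. This makes every backward-directed blue path literally a forward-directed red path on the same vertex sequence, so it suffices to analyse forward paths; one then shows that any forward red path is eventually trapped in a single residue class modulo $2^k$ for every $k$, forcing upper density~$0$. Your single-coordinate Cantor-pairing idea cannot be rescued by a simple tie-break (e.g.\ lexicographic on $(a,b)$ just transfers the same problem to forward red paths); what is really needed is a colouring with the red/blue edge-reversal symmetry, which is exactly what the $2$-adic construction provides.
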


DeBiasio and McKenney also asked the following natural question: does there exist a 2-colouring of $\vec{K}_{\mathbb{N}}$ in which every monochromatic directed path has upper density 0? In Section \ref{sec:density0}, we will give a positive answer to this question (taken from the manuscript \cite{G17} by the third author). We note that the same example was independently obtained by Jan Corsten \cite{C17}.

\begin{theorem}
\label{thm:density0}
There exists a 2-colouring of $\vec{K}_{\mathbb{N}}$ such that every monochromatic directed path has upper density 0.
\end{theorem}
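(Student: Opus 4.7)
Proof Proposal. The plan is to build the colouring by an explicit recursive construction on a rapidly-growing partition of $\mathbb{N}$, and then verify density~$0$ by a recursive analysis of monochromatic directed paths.

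I would start by fixing a sequence $a_0<a_1<a_2<\dots$ with $a_{k-1}/a_k\to 0$ (say $a_k=2^{2^k}$), setting $I_k=(a_{k-1},a_k]$, so that $\mathbb{N}=\bigsqcup_k I_k$. Recursively, I would partition each $I_k$ into sub-intervals of sizes growing at a similar rapid rate, iterating until we reach singletons. At every nesting level the colouring would be determined by a local rule: between sub-intervals at the current level, ``forward'' edges get one colour and ``backward'' edges the other, with the colour roles alternately swapped at each deeper level. (A purely directional alternating scheme fails---the $2$-vertex recursion already produces a monochromatic directed Hamiltonian path---so the rule at each level would be refined by combining it with a Raynaud-style ``source-based'' partition to block such paths.)

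Given such a colouring, I would analyse a monochromatic directed path $P$, say a red one. The top-level rule forces $P$ to enter each $I_k$ at most once, so $V(P)\cap I_k$ is a contiguous segment of $P$ that lies entirely in $I_k$ and forms a red sub-path there. Iterating one level deeper, with red and blue swapped, the segment inside $I_k$ enters each sub-interval at most once, and so on down the partition tree. A density bound then takes the form $|V(P)\cap[1,N]|\leq a_{K-1}+|V(P)\cap I_K|$ for $N\in I_K$; the first term is $o(N)$ by rapid growth, and we peel $|V(P)\cap I_K|$ into its intersection with the deepest sub-interval iteratively.

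The most delicate step is the local rule design: a naive one-level Raynaud split only gives density~$1/2$, so to push the density to~$0$, the recursive local rules must force, at every nesting level, at most a vanishing fraction of the sub-interval to lie on any monochromatic directed path. Combined with the rapid growth of $(a_k)$, which ensures the density is controlled by the deepest reachable level of the recursion, this should yield $|V(P)\cap[1,N]|/N\to 0$ and hence upper density $0$ as required.
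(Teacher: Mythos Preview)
Your proposal is not a proof: the central object---the colouring itself---is never actually defined. You correctly observe that the naive directional/alternating scheme fails (on four vertices it already admits a monochromatic directed Hamiltonian path), but your proposed fix, to ``combine it with a Raynaud-style source-based partition,'' is left entirely unspecified. The difficulty you flag in your final paragraph---that the local rule must force only a vanishing fraction of each sub-interval onto any monochromatic directed path---is precisely the content of the theorem, and you have not exhibited a rule that achieves it. Without a concrete colouring, the density analysis cannot be carried out either: the bound $|V(P)\cap[1,N]|\le a_{K-1}+|V(P)\cap I_K|$ is fine, but ``peeling'' $|V(P)\cap I_K|$ recursively only helps if the local rule genuinely forces this quantity to be $o(|I_K|)$, which again depends on the missing construction.

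By contrast, the paper's construction is explicit and avoids all of this. It partitions $\mathbb{N}$ by residue classes modulo $2^k$ rather than by intervals: for distinct $m,n$, let $t$ be the least $s$ with $m\not\equiv n\pmod{2^s}$, and colour the edge from the vertex in the lower residue class mod $2^t$ to the one in the upper class red, the reverse edge blue. The analysis is then a one-line trapping argument: once a red directed path enters the upper residue class at any level, it can never return to the lower one (every edge back is blue), so the path is eventually confined to a single residue class mod $2^k$ for every $k$, giving upper density at most $2^{-k}$ for all $k$. No rapid growth, no recursive interval refinement, and no delicate local rule are needed.
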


In light of this result, it is natural to ask under what conditions we can guarantee the existence of a monochromatic path of positive density. It is easy to see (from Ramsey's Theorem) that every $r$-colouring of $\vec{K}_{\mathbb{N}}$ contains a monochromatic directed path of infinite length. The third author observed in \cite{G17} that if one restricts the maximal length of directed paths in the first colour, then there must be monochromatic paths in the second colour with non-vanishing upper density. More generally, the authors proved the following sequence of results:

\begin{itemize}
\item In the manuscript \cite{G17}, Guggiari shows that for any $(r+1)$-edge-colouring of $\vec{K}_{\mathbb{N}}$ for which there are no directed paths of length $\ell_i$ in colour $i$ for any $i\in [r]$, there is a monochromatic directed path in colour $r+1$ with upper density at least $\prod_{i\leq r} \frac{1}{\ell_i}$.
\item Confirming and extending a conjecture by Guggiari, B\"urger and Pitz showed in the manuscript \cite{BP17} that under the same assumptions as above, the vertex set $\N$ can be covered by $\prod_{i\leq r} \ell_i$ pairwise disjoint monochromatic directed paths in colour $r+1$. 
\end{itemize}

In Section~\ref{sec:rColors} of this paper, we will present a proof of the following statement, which nicely generalizes the  results above. Write $\vec{K}$ for the complete symmetric digraph on a finite or (not-necessarily-countable) infinite number of vertices.

\begin{theorem}\label{Louis Thm}
Let $c\colon E(\vec{K})\rightarrow [r+1]$ be an edge-colouring of $\vec{K}$ for which there is no directed path of length $\ell_i$ in colour $i$ for any $i \in [r]$. Then there is a partition of $V(\vec{K})$  into $\prod_{i\in[r]} \ell_i$ complete symmetric digraphs monochromatic in colour $r+1$.
\end{theorem}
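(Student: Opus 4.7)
The plan is a Gallai--Roy-style argument. For each colour $i \in [r]$, I will construct a function $f_i \colon V(\vec{K}) \to \{0, 1, \ldots, \ell_i - 1\}$ with the property that whenever an edge $(u, v)$ has colour $i$, one has $f_i(u) \neq f_i(v)$. Combining these into $f(v) := (f_1(v), \ldots, f_r(v))$ gives a map from $V(\vec{K})$ to a set of size $\prod_{i \in [r]} \ell_i$, whose fibres partition the vertex set. Any two distinct vertices $u, v$ in the same fibre satisfy $f_i(u) = f_i(v)$ for every $i \in [r]$, so no edge between them can have colour $i$ for $i \leq r$; both $(u,v)$ and $(v,u)$ must therefore have colour $r+1$. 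Hence each fibre induces a monochromatic complete symmetric subdigraph in colour $r+1$, giving the required partition.

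To construct $f_i$, I choose (invoking Zorn's lemma in the uncountable case) a maximal acyclic spanning subdigraph $H_i$ of the colour-$i$ subdigraph of $\vec{K}$; Zorn's lemma applies because any cycle is a finite object, so the union of a chain of acyclic subdigraphs is itself acyclic. Define $f_i(v)$ to be the length of a longest directed path in $H_i$ ending at $v$. Since $H_i$ is contained in the colour-$i$ subdigraph, which by hypothesis contains no directed path of length $\ell_i$, this maximum is attained and lies in $\{0, 1, \ldots, \ell_i - 1\}$. The key property is then verified by a case analysis. If $(u, v) \in H_i$ has colour $i$, appending $(u,v)$ to a longest $H_i$-path $P$ ending at $u$ yields an $H_i$-path ending at $v$ of length $f_i(u)+1$; this path is simple because a repeat of $v$ on $P$ would, together with $(u,v)$, close a cycle in the acyclic $H_i$. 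Hence $f_i(v) > f_i(u)$. If instead $(u, v) \notin H_i$, then maximality of $H_i$ forces $H_i \cup \{(u,v)\}$ to contain a cycle using $(u,v)$, so $H_i$ has a directed path $Q$ from $v$ to $u$; concatenating $Q$ onto a longest $H_i$-path ending at $v$ produces an $H_i$-path ending at $u$ of length $f_i(v) + |Q| > f_i(v)$, and again the concatenation is simple because a shared interior vertex would close a cycle in $H_i$. Thus $f_i(u) > f_i(v)$.

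The only real subtlety is the appeal to Zorn's lemma to produce the maximal acyclic subdigraphs $H_i$ in the uncountable case, together with the observation that the finiteness of path lengths in $H_i$ (inherited from the hypothesis) ensures $f_i$ is well-defined with the prescribed finite range. Once the $f_i$ are in hand, combining them into $f$ and reading off the partition from its fibres is purely formal, and the product bound $\prod_{i \in [r]} \ell_i$ falls out automatically.
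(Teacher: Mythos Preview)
Your proof is correct and takes essentially the same approach as the paper: both rely on the Gallai--Hasse--Roy--Vitaver construction (maximal acyclic subdigraph plus longest-path labelling) applied to each colour class. The only cosmetic difference is that the paper states this construction as a separate lemma and then applies it inductively, peeling off one colour at a time, whereas you run the construction for all $r$ colours in parallel and take the product of the resulting labellings.
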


In particular, for the countable directed complete digraph $\K{\N}$, we reobtain the result, now with a much shorter proof, that for any $(r+1)$-edge-colouring of $\vec{K}_{\mathbb{N}}$ for which there are no monochromatic directed paths of length $\ell_i$ in colour $i$ for any $i\in [r]$, the vertex set $\N$ can be covered by $\prod_{i\leq r} \ell_i$ pairwise disjoint monochromatic directed paths in colour $r+1$. 

It is not hard to see that this result is best possible: There are colourings of $\K{\N}$, which we will call \emph{cube colourings} for their geometric structure, that witness optimality of the above result, see Theorem~\ref{thm_cubecolouringworks} below. For example, it is not hard to see that any blue monochromatic path in the colouring of Figure~\ref{fig_cubecolouring} has upper density at most $\frac16$.

\begin{figure}[H]
\centering
\begin{tikzpicture}
\draw (-3,1) ellipse (1cm and 0.5cm) node {$U_{0,0}$};
\draw (0,1) ellipse (1cm and 0.5cm) node {$U_{0,1}$};
\draw (3,1) ellipse (1cm and 0.5cm) node {$U_{0,2}$};
\draw (-3,-1) ellipse (1cm and 0.5cm) node {$U_{1,0}$};
\draw (0,-1) ellipse (1cm and 0.5cm) node {$U_{1,1}$};
\draw (3,-1) ellipse (1cm and 0.5cm) node {$U_{1,2}$};

\draw[-{Latex[length=2mm,width=2mm]},green,thick] (-1,1) to (-2,1);
\draw[-{Latex[length=2mm,width=2mm]},green,thick] (2,1) to (1,1);
\draw[-{Latex[length=2mm,width=2mm]},green,thick] (2,1.3) to [bend right] (-2,1.3);

\draw[-{Latex[length=2mm,width=2mm]},green,thick] (-1,-1) to (-2,-1);
\draw[-{Latex[length=2mm,width=2mm]},green,thick] (2,-1) to (1,-1);
\draw[-{Latex[length=2mm,width=2mm]},green,thick] (2,-1.3) to [bend left] (-2,-1.3);

\draw[-{Latex[length=2mm,width=2mm]},red,thick] (-3,-0.5) to (-3,0.5);
\draw[-{Latex[length=2mm,width=2mm]},red,thick] (0,-0.5) to (0,0.5);
\draw[-{Latex[length=2mm,width=2mm]},red,thick] (3,-0.5) to (3,0.5);

\draw[-{Latex[length=2mm,width=2mm]},red,thick] (2.2,-0.7) to (0.9,0.8);
\draw[-{Latex[length=2mm,width=2mm]},red,thick] (-2.2,-0.7) to (-0.9,0.8);
\draw[-{Latex[length=2mm,width=2mm]},red,thick] (0.9,-0.8) to (2.2,0.7);
\draw[-{Latex[length=2mm,width=2mm]},red,thick] (-0.9,-0.8) to (-2.2,0.7);

\draw[-{Latex[length=2mm,width=2mm]},red,thick] (2.1,-0.8) to (-2.1,0.8);
\draw[-{Latex[length=2mm,width=2mm]},red,thick] (-2.1,-0.8) to (2.1,0.8);

\node[right,red] at (5,0.5) {1 = Red};
\node[right,green] at (5,0) {2 = Green};
\node[right,blue] at (5,-0.5) {3 = Blue};
\end{tikzpicture}
\caption{Cube colouring for $\ell_1=2$ and $\ell_2=3$. All edges not shown are blue. All partition classes $U_{x,y}$ have density $\frac16$.}
\label{fig_cubecolouring}
\end{figure}
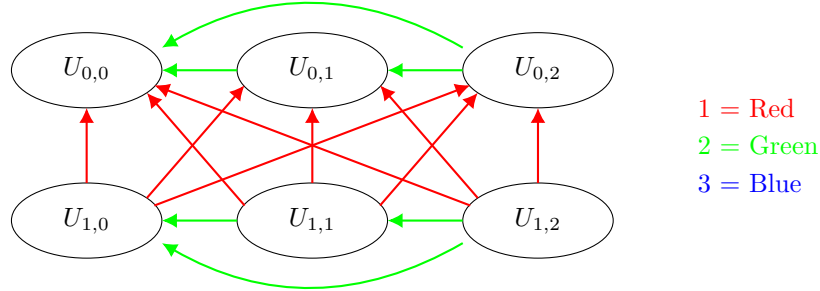

Extending the work of Guggiari in \cite[Theorem~1.4]{G17}, we will prove a stability version of Theorem~\ref{Louis Thm} in Section~\ref{sec:Stability} of this paper. This stability result, Theorem~\ref{Stability Thm}, says that any colouring $c\colon E(\K{\N})\rightarrow [r+1]$ for which there is no directed path of length $\ell_i$ in colour $i$ for any $i \in [r]$ and every directed path in colour $r+1$ has upper density at most $\prod_{i\in [r]}\frac{1}{\ell_i}$, must essentially be of the same cubic structure as the \emph{cube colouring}, where the meaning of `essentially' will be explained in Section~\ref{sec:Stability}.

\section{An example where all monochromatic paths have density $0$}
\label{sec:density0}
\begin{proof}[Proof of Theorem \ref{thm:density0}]
We colour the edges of $\vec{K}_{\mathbb{N}}$ in the following way. Let $m,n\in\mathbb{N}$ be distinct positive integers. Set $t=\min\{s\in\mathbb{N}:m\not\equiv n\mod2^s\}$. Exchanging $m$ and $n$ if necessary, we may assume that $m\equiv x\mod2^t$ where $x\in\{0,\dots,2^{t-1}-1\}$ and $n\equiv2^{t-1}+x\mod2^t$. We colour $(m,n)$ red and $(n,m)$ blue.

Let $P$ be any monochromatic directed path. If $P$ is a finite path, then $\bar{d}(P)=0$. Therefore, we may assume that $P$ is an infinite path. Without loss of generality, $P$ is red.
\\
\\
\emph{Inductive Hypothesis.} For any $k\in\mathbb{N}$, there exists $i\in\{0,\dots,2^k-1\}$ such that $P$ is eventually contained within the set $\{n\in\mathbb{N}:n\equiv i\mod2^k\}$. Hence, $\bar{d}(P)\leq2^{-k}$.
\\
\\
\emph{Base Case.} For $i\in\{0,1\}$, let $A_i=\{n\in\mathbb{N}:n\equiv i\mod2\}$. The sets $A_0$ and $A_1$ partition the vertices of $\vec{K}_{\mathbb{N}}$. Suppose $P$ contains a vertex $u\in A_1$. Then all of the vertices occurring after $u$ in $P$ must also be in $A_1$ because $P$ is a red directed path and hence $P$ is eventually contained within $A_1$. If $P$ does not contain a vertex from $A_1$, then $P$ must be completely contained within $A_0$. Hence, for some $i\in\{0,1\}$, we have $\bar{d}(P)\leq\bar{d}(A_i)=\frac12$.
\\
\\
\emph{Inductive Step.} Fix $k\geq2$ and partition the vertices of $\vec{K}_{\mathbb{N}}$ into $2^k$ sets $A_0,\dots,A_{2^k-1}$ based on their residue modulo $2^k$ (see Figure \ref{fig:densityzero}). By the inductive hypothesis, there exists $i\in\{0,\dots,2^{k-1}-1\}$ such that $P$ is eventually contained within the set $A_i\cup A_{2^{k-1}+i}$. By using the same argument as in the base case, we find that, if $P$ contains a vertex in $A_{2^{k-1}+i}$, then it is eventually contained within this set; otherwise, it is eventually contained within $A_i$. Hence, there exists $j\in\{0,\dots,2^k-1\}$ such that $P$ is eventually contained within $A_j$ and so $\bar{d}(P)\leq\bar{d}(A_j)=2^{-k}$.

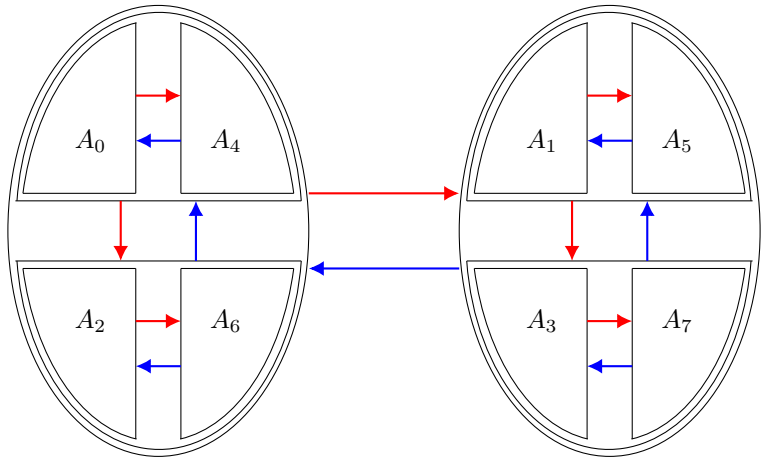
\begin{figure}[H]
\centering
\begin{tikzpicture}
\draw (-3,0) ellipse (2cm and 3cm) node {};
\draw (3,0) ellipse (2cm and 3cm) node {};
\draw[-{Latex[length=2mm,width=2mm]},blue,thick] (1,-0.5) to (-1,-0.5);
\draw[-{Latex[length=2mm,width=2mm]},red,thick] (-1,0.5) to (1,0.5);
\draw (-1.1,0.4) arc (6:174:1.9cm and 2.8cm);
\draw (-1.1,0.4) -- (-4.9,0.4);
\draw (-1.1,-0.4) arc (6:174:1.9cm and -2.8cm);
\draw (-1.1,-0.4) -- (-4.9,-0.4);
\draw[-{Latex[length=2mm,width=2mm]},blue,thick] (-2.5,-0.4) to (-2.5,0.4);
\draw[-{Latex[length=2mm,width=2mm]},red,thick] (-3.5,0.4) to (-3.5,-0.4);
\draw (1.1,0.4) arc (6:174:-1.9cm and 2.8cm);
\draw (1.1,0.4) -- (4.9,0.4);
\draw (1.1,-0.4) arc (6:174:-1.9cm and -2.8cm);
\draw (1.1,-0.4) -- (4.9,-0.4);
\draw[-{Latex[length=2mm,width=2mm]},blue,thick] (3.5,-0.4) to (3.5,0.4);
\draw[-{Latex[length=2mm,width=2mm]},red,thick] (2.5,0.4) to (2.5,-0.4);
\draw (1.2,0.5) arc (12:82:-1.8cm and 2.9cm);
\draw (1.2,0.5) -- (2.7,0.5);
\draw (2.7,2.77) -- (2.7,0.5);
\draw (4.8,0.5) arc (12:82:1.8cm and 2.9cm);
\draw (4.8,0.5) -- (3.3,0.5);
\draw (3.3,2.77) -- (3.3,0.5);
\draw[-{Latex[length=2mm,width=2mm]},blue,thick] (3.3,1.2) to (2.7,1.2);
\draw[-{Latex[length=2mm,width=2mm]},red,thick] (2.7,1.8) to (3.3,1.8);
\draw (-1.2,0.5) arc (12:82:1.8cm and 2.9cm);
\draw (-1.2,0.5) -- (-2.7,0.5);
\draw (-2.7,2.77) -- (-2.7,0.5);
\draw (-4.8,0.5) arc (12:82:-1.8cm and 2.9cm);
\draw (-4.8,0.5) -- (-3.3,0.5);
\draw (-3.3,2.77) -- (-3.3,0.5);
\draw[-{Latex[length=2mm,width=2mm]},blue,thick] (-2.7,1.2) to (-3.3,1.2);
\draw[-{Latex[length=2mm,width=2mm]},red,thick] (-3.3,1.8) to (-2.7,1.8);
\draw (1.2,-0.5) arc (12:82:-1.8cm and -2.9cm);
\draw (1.2,-0.5) -- (2.7,-0.5);
\draw (2.7,-2.77) -- (2.7,-0.5);
\draw (4.8,-0.5) arc (12:82:1.8cm and -2.9cm);
\draw (4.8,-0.5) -- (3.3,-0.5);
\draw (3.3,-2.77) -- (3.3,-0.5);
\draw[-{Latex[length=2mm,width=2mm]},blue,thick] (3.3,-1.8) to (2.7,-1.8);
\draw[-{Latex[length=2mm,width=2mm]},red,thick] (2.7,-1.2) to (3.3,-1.2);
\draw (-1.2,-0.5) arc (12:82:1.8cm and -2.9cm);
\draw (-1.2,-0.5) -- (-2.7,-0.5);
\draw (-2.7,-2.77) -- (-2.7,-0.5);
\draw (-4.8,-0.5) arc (12:82:-1.8cm and -2.9cm);
\draw (-4.8,-0.5) -- (-3.3,-0.5);
\draw (-3.3,-2.77) -- (-3.3,-0.5);
\draw[-{Latex[length=2mm,width=2mm]},blue,thick] (-2.7,-1.8) to (-3.3,-1.8);
\draw[-{Latex[length=2mm,width=2mm]},red,thick] (-3.3,-1.2) to (-2.7,-1.2);
\node at (-3.9,1.2) {$A_0$};
\node at (2.1,1.2) {$A_1$};
\node at (-3.9,-1.2) {$A_2$};
\node at (2.1,-1.2) {$A_3$};
\node at (-2.1,1.2) {$A_4$};
\node at (3.9,1.2) {$A_5$};
\node at (-2.1,-1.2) {$A_6$};
\node at (3.9,-1.2) {$A_7$};
\end{tikzpicture}
\caption{Diagram showing the edges between sets for $k=3$.}
\label{fig:densityzero}
\end{figure}
\noindent
The inductive hypothesis holds for every $k\in\mathbb{N}$. Therefore, if $P$ is any monochromatic directed path, we have that the upper density of $P$ is at most $2^{-k}$ for every $k\in\mathbb{N}$. Hence, $P$ has upper density 0.
\end{proof}

\section{Partitioning complete symmetric digraphs}\label{sec:rColors}
We will make use of the following well-known result.  We give a proof both for completeness and since many references only handle the finite case.

\begin{theorem}[Gallai \cite{G68}, Hasse \cite{H65}, Roy \cite{R73}, Vitaver \cite{V62}]\label{GHRV}
Let $D$ be a (not-necessarily-countable) directed graph and let $G$ be the underlying graph of $D$. If the longest path in $D$ has length $0\le k <\infty$, then $\chi(G)\le k+1$. 
\end{theorem}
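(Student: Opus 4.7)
The plan is to color each vertex with the length of a longest directed path that ends at it. In the (hypothetical) case that $D$ is acyclic, this is clean: define $f\colon V(D)\to\{0,1,\dots,k\}$ by letting $f(v)$ be the length of a longest directed path ending at $v$, which is attained by integrality and bounded by $k$. For any edge $(u,v)$, acyclicity of $D$ prevents $v$ from lying on a longest path ending at $u$, so appending $(u,v)$ produces a path ending at $v$ of length $f(u)+1$, giving $f(v)\geq f(u)+1$ and hence a proper $(k+1)$-coloring.

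To reach the general (possibly cyclic) case, I would first pass to a maximal acyclic spanning subdigraph $D^{*}\subseteq D$. For finite $D$ this is a greedy construction; for arbitrary (possibly uncountable) $D$, Zorn's lemma applied to the poset of acyclic spanning subdigraphs ordered by inclusion does the job, since every directed cycle uses only finitely many edges, so an increasing union of acyclic spanning subdigraphs is again acyclic. I would then redefine $f$ using longest paths in $D^{*}$; the bound $f(v)\leq k$ persists because $D^{*}\subseteq D$.

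The heart of the argument is checking that this $f$ remains a proper coloring of the underlying graph $G$ of the full digraph $D$. Edges lying in $D^{*}$ are handled by the acyclic argument above. For an edge $(u,v)\in E(D)\setminus E(D^{*})$, maximality of $D^{*}$ guarantees that $D^{*}\cup\{(u,v)\}$ contains a directed cycle, and hence that there is a directed $v$-$u$ path $P'$ in $D^{*}$. Concatenating a longest directed path ending at $v$ in $D^{*}$ with $P'$ then yields a directed path in $D^{*}$ ending at $u$ of length at least $f(v)+1$, so $f(u)\geq f(v)+1$. Verifying that this concatenation really is a (simple) path, i.e., that the two pieces meet only at $v$, is where acyclicity of $D^{*}$ is used once more.

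The step I expect to be the only genuinely nontrivial one in the infinite setting is the existence of the maximal acyclic spanning subdigraph $D^{*}$ via Zorn's lemma; every other step is essentially finitary, because the assumption $k<\infty$ bounds all path lengths that enter the argument and reduces each local verification to a statement about finitely many edges.
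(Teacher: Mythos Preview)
Your proposal is correct and follows essentially the same approach as the paper: pass to a maximal acyclic spanning subdigraph via Zorn's lemma (or a well-ordering), colour each vertex by the length of an extremal path through it, and handle edges outside the acyclic subdigraph via the detour path guaranteed by maximality. The only cosmetic differences are that the paper uses the length of the longest path \emph{starting} at $v$ rather than ending at $v$, and for an edge $(x,y)\in E(D)\setminus E(D')$ the paper chains its first claim along the $y$--$x$ detour path in $D'$ rather than performing an explicit concatenation as you do.
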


\begin{proof}
Let $D'$ be a maximal acyclic subgraph of $D$; that is, take a well ordering of the edges of $D$ and add them one a time subject to the condition that a (finite) cycle is not created (or take the poset of acyclic subgraphs of $D$ ordered by inclusion and choose a maximal element by Zorn's lemma).  For each $0\leq i\leq k$, let $$U_i=\{v\in V(D): \text{ the length of the longest path in } D' \text{ starting at } v \text{ is } i\}.$$  By the hypothesis, $\{U_0, U_1, \dots, U_k\}$ partitions $V(D)$.  

We first claim that if $(y,x)\in E(D')$ with $x\in U_i$ and $y\in U_j$, then $i<j$. Let $P$ be a path in $D'$ of length $i$ which starts at $x$.  Since $D'$ is acyclic, $y\not\in V(P)$ and thus $yP$ is a path of length $i+1$ in $D'$ which, since $y\in U_j$,  implies $j\geq i+1$.  Next we claim that if $(x,y)\in E(D)\setminus E(D')$ with $x\in U_i$ and $y\in U_j$, then $i<j$.  Since $(x,y)\in E(D)\setminus E(D')$, the addition of $(x,y)$ to $D'$ must create a cycle, which implies that there is a $y-x$-path in $D'$.  By the previous claim, this implies that $i<j$.  Together, these two claims imply that there is no edge from $D$ with both endpoints in $U_i$ for any $0\leq i\leq k$ and thus $\{U_0, U_1, \dots, U_k\}$ is a proper colouring of $G$.  
\end{proof}

\begin{proof}[Proof of Theorem \ref{Louis Thm}]
For $r=0$, the result is trivial (note $\prod_{i=1}^r\ell_i=1$ in this case).  Let $r\geq 1$ and suppose the result holds for $r$-colourings satisfying the required path length condition.  Now consider an $(r+1)$-colouring in which every path of colour $1\leq i\leq r$ has length at most $\ell_i-1$.  Apply Theorem \ref{GHRV} to the digraph induced by the edges of colour $r$ to get a partition $\{U_0, \dots, U_{\ell_{r}-1}\}$ of $V$ such that each $U_j$ contains no edges of colour $r$. So each $U_j$ is an $r$-coloured complete symmetric digraph such that, for all $i\in [r-1]$, every path of colour $i$ has length at most $\ell_i-1$.  Thus by induction, there is a partition of each $U_j$ into 
$\prod_{i=1}^{r-1}\ell_i$ complete symmetric digraphs of colour $r+1$, giving a partition of $\vec{K}$ into a total of $\ell_r\prod_{i=1}^{r-1}\ell_i=\prod_{i=1}^{r}\ell_i$ complete symmetric digraphs of colour $r+1$.
\end{proof}

\begin{cor}[{cf. \cite{G17}}]\label{cor: exist. of dense mono. path}
Let $c\colon E(\K{\N})\rightarrow [r+1]$ be an edge-colouring of $\K{\N}$ for which there is no directed path of length $\ell_i$ in colour $i$ for any $i \in [r]$. Then there exists a directed path of colour $r+1$ with upper density at least $\prod_{i\in r} \frac{1}{\ell_i}$.
\end{cor}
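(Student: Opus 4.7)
The plan is to derive this corollary directly from Theorem~\ref{Louis Thm} in three short steps. First, I would invoke Theorem~\ref{Louis Thm} with $\vec{K} = \K{\N}$ to obtain a partition $V(\K{\N}) = U_1 \sqcup \cdots \sqcup U_k$, where $k = \prod_{i\in[r]} \ell_i$, such that each induced subgraph $\K{\N}[U_j]$ is a complete symmetric digraph monochromatic in colour $r+1$.

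Second, I would use a standard pigeonhole-style argument to find an index $j$ with $\bar d(U_j) \ge 1/k$. The point is that if each $\bar d(U_j) < 1/k$ strictly, then there would exist $N$ such that $|U_j \cap [n]|/n < 1/k$ for all $j$ and all $n \ge N$, whence summing over $j$ would give $n = \sum_j |U_j \cap [n]| < n$, a contradiction. (This is one of the few places where upper density is well-behaved: since a strict inequality $\bar d(A) < c$ forces $|A\cap [n]|/n < c$ eventually, the finite-partition pigeonhole argument goes through.)

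Third, since $\bar d(U_j) > 0$, the class $U_j$ is infinite; write $U_j = \{u_1 < u_2 < u_3 < \cdots\}$ in increasing order. The infinite directed sequence $P := u_1 u_2 u_3 \cdots$ is a directed path in $\K{\N}$ whose edges all lie inside $\K{\N}[U_j]$ and hence are all coloured $r+1$. Its vertex set is exactly $U_j$, so $\bar d(P) = \bar d(U_j) \ge 1/k = \prod_{i\in[r]} \tfrac{1}{\ell_i}$, which is the desired bound.

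I do not anticipate a real obstacle here: Theorem~\ref{Louis Thm} does all the combinatorial heavy lifting, and the only subtlety worth stating carefully is the pigeonhole step for upper density, since in general $\max_j \bar d(A_j)$ need not equal the $\limsup$ of the pointwise maxima — but the one-sided bound $\max_j \bar d(A_j) \ge 1/k$ for a partition into $k$ pieces is all we require, and it follows by the simple eventual-inequality argument sketched above.
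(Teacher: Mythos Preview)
Your proposal is correct and is exactly the intended derivation: the paper states this corollary without proof immediately after Theorem~\ref{Louis Thm}, and the argument is precisely to take the partition into $\prod_{i\in[r]}\ell_i$ monochromatic complete symmetric digraphs, pick a class of upper density at least $\prod_{i\in[r]}\frac{1}{\ell_i}$ by pigeonhole, and enumerate it to obtain the desired path. Your care with the upper-density pigeonhole step is appropriate and correctly handled.
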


\begin{cor}[{cf. \cite{BP17}}] \label{cor: exist. of dense mono. path}
Let $c\colon E(\K{\N})\rightarrow [r+1]$ be an edge-colouring of $\K{\N}$ for which there is no directed path of length $\ell_i$ in colour $i$ for any $i \in [r]$. Then the vertex set $\N$ can be partitioned into at most $\prod_{i\in r} \ell_i$ many monochromatic directed paths of colour $r+1$.
\end{cor}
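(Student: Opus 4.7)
The plan is to derive this corollary as an almost immediate consequence of Theorem~\ref{Louis Thm}. First I would invoke Theorem~\ref{Louis Thm} with $\vec{K}=\K{\N}$ to obtain a partition of $\N$ into $N := \prod_{i\in [r]} \ell_i$ vertex classes $V_1,\dots,V_N$ such that, for each $j$, the induced subgraph on $V_j$ is a complete symmetric digraph whose every edge has colour $r+1$. Some of these classes may be empty or finite; this is what accounts for the bound being ``at most $N$'' rather than exactly $N$.

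Second, I would observe that any at most countable complete symmetric digraph trivially admits a Hamiltonian directed path: enumerating the vertices of $V_j$ in any order as $v_1^j, v_2^j, \dots$ (either finite or of order type $\omega$), the directed edges $(v_1^j, v_2^j), (v_2^j, v_3^j), \dots$ are all present in $\K{\N}[V_j]$ and all carry colour $r+1$, so they form a directed path $P_j$ on vertex set $V_j$ monochromatic in colour $r+1$.

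Third, discarding any $P_j$ with empty vertex set, the remaining paths partition $\N$ into at most $N=\prod_{i\in[r]}\ell_i$ pairwise disjoint monochromatic directed paths in colour $r+1$, as required. Since Theorem~\ref{Louis Thm} already delivers the much stronger partition into complete symmetric monochromatic digraphs, the only (essentially trivial) extra input is the existence of a Hamiltonian directed path in any countable complete symmetric digraph; there is no real obstacle to overcome.
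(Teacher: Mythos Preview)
Your proposal is correct and matches the paper's approach exactly: the paper states this corollary without proof, treating it as immediate from Theorem~\ref{Louis Thm}, and the derivation you give---partition into monochromatic complete symmetric digraphs, then take a Hamiltonian directed path in each---is precisely the intended argument.
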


This is best possible as shown by the \emph{cube colouring} on any \emph{cube partition} of $\N$ with equally upper-dense partition classes:

\begin{definition}[Cube partition]
For positive integers $\ell_1,\dots,\ell_r$ a partition $$\U=(U_i\colon i\in \prod_{i\in[r]} \{0,\dots ,\ell_i-1\})$$ of $\N$ indexed by the \emph{cube} $\prod_{i\in[r]} \{0,\dots ,\ell_i-1\}$ is called a \emph{cube partition} (of $\N$ of order $(\ell_1,\dots,\ell_r)$).
\end{definition}

\begin{definition}[Cube colouring]
For a cube partition $\U$ of $\N$, define the cube colouring $c_\U$ on $\K{\N}$ as follows: Consider an edge $(m,n)\in E(\K{\N})$. If $m,n\in U_{i_1,\dots,i_r}$, then colour both $(m,n)$ and $(n,m)$ with colour $r+1$. If not, suppose that $m\in U_{i_1,\dots,i_r}$ and $n\in U_{j_1,\dots,j_r}$. Let $k=\textup{min}\{k'\in [r]\colon i_{k'}\neq j_{k'}\}$ and $i_k<j_k$ (say). Colour $(m,n)$ with colour $r+1$ and $(n,m)$ with colour $k$. 
\end{definition}

See Figure~\ref{fig_cubecolouring} in the introduction for the case with three colours and $\ell_1=2$ and $\ell_2=3$.

\begin{theorem}
\label{thm_cubecolouringworks}
Let $c_\U$ be the cube colouring on a cube partition $\U$ with equally (upper-) dense partition classes. Then there is no directed path of length $\ell_i$ in colour $i$ for any $i\in [r]$ and every directed monochromatic path of colour $r+1$ has upper density at most $\prod_{i\in [r]}\frac{1}{\ell_i}$.  
\end{theorem}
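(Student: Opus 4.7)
The plan is to verify the two assertions by tracking how each colour of $c_\U$ interacts with the lexicographic order on the index cube $\prod_{i\in[r]}\{0,\dots,\ell_i-1\}$. For a vertex $v\in\N$, write $\pi_i(v)$ for the $i$-th coordinate of the unique class of $\U$ containing $v$.

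For the first assertion, unpacking the definition of $c_\U$ shows that a directed edge $(n,m)$ coloured $i$ connects classes agreeing on their first $i-1$ coordinates and satisfying $\pi_i(m)<\pi_i(n)$. Consequently, along any directed colour-$i$ path $v_0\to v_1\to\cdots\to v_t$, the integers $\pi_i(v_0),\pi_i(v_1),\dots,\pi_i(v_t)$ form a strictly decreasing sequence in $\{0,\dots,\ell_i-1\}$, which forces $t\le \ell_i-1$; hence no directed colour-$i$ path has length $\ell_i$.

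For the second assertion, first observe that every intra-class edge is coloured $r+1$, while every inter-class edge $(m,n)$ of colour $r+1$ runs from a lexicographically smaller class to a lexicographically larger one. Thus the sequence of classes visited by any directed colour-$(r+1)$ path is lexicographically non-decreasing, so once the path leaves a class it never returns. Since the cube indexes only $N:=\prod_{i\in[r]}\ell_i$ classes, an infinite directed colour-$(r+1)$ path $P$ can change class at most $N-1$ times and is therefore eventually contained in a single class $C$ of $\U$. Because upper density is unaffected by removing a finite initial segment, $\bar{d}(V(P))=\bar{d}(V(P)\cap C)\le \bar{d}(C)$; for finite $P$ the bound is trivial.

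The last step is to combine this with the hypothesis that all classes of $\U$ have equal upper density. Subadditivity of $\bar{d}$ and the partition identity $\bigsqcup_j C_j=\N$ yield $N\bar{d}(C)\ge 1$; reading ``equally (upper-) dense'' as pinning this down to equality (e.g.\ by requiring each class to have natural density $1/N$) gives $\bar{d}(V(P))\le \bar{d}(C)=1/N$, as required. I expect the only real content of the proof to be the lexicographic monotonicity observation for colour $r+1$; the principal (minor) obstacle is handling this density calibration carefully.
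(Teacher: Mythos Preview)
Your argument is correct and follows essentially the same route as the paper's proof: both track the $i$th coordinate along a colour-$i$ path to get a strictly decreasing sequence in $\{0,\dots,\ell_i-1\}$, and both use the lexicographic non-decrease of the index along a colour-$(r+1)$ path to conclude that such a path is eventually trapped in a single partition class. Your final paragraph on density is slightly more elaborate than necessary---the subadditivity inequality $N\bar d(C)\ge 1$ points the wrong way and is not used; the paper simply reads the hypothesis ``equally (upper-) dense'' as $\bar d(C)=1/N$ outright, which is what you end up doing as well.
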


\begin{proof}
Let $f$ map a vertex $v\in \K{\N}$ to the index of the partition class of $\U$ that contains $v$. For $i\in [r]$, let $f_i$ map a vertex $v$ to the $i$th entry of $f(v)$. 

First, consider a monochromatic forward directed path $P=v_0\dots v_m$ in some colour $i\in[r]$. Then $(f_i(v_0),\dots ,f_i(v_m))$ is a strictly decreasing sequence in $\{0,\dots, \ell_i-1\}$ and thus $P$ has length smaller than $\ell_i$. The proof for backward directed case is analogous. 

Second, consider a monochromatic forward directed path $P$ with colour $r+1$. If $P$ is finite, then it has upper density $0$ so we may assume that $P$ is infinite, $P=v_0v_1v_2\dots$ (say). The sequence $f(v_0)f(v_1)f(v_2)..$ is an increasing sequence of indices with respect to the lexicographic order on the cube $\prod_{i\in[r]}\{0,\dots,\ell_i-1\}$. Hence the vertices of $P$ are eventually contained in some partition class of $\U$. On the other hand, each partition class in $\U$ has upper density exactly $\prod_{i\in [r]}\frac{1}{\ell_i}$ completing the proof. The proof for the backward directed monochromatic paths of colour $r+1$ is analogue.   
\end{proof}

\section{Cube-like structures and stability theorem}\label{sec:Stability}

The main result of this section, our stability result, says that any optimal colouring $c\colon E(\K{\N})\rightarrow [r+1]$ for which there is no directed path of length $\ell_i$ in colour $i$ for any $i \in [r]$ and every directed path in colour $r+1$ has upper density at most $\prod_{i\in [r]}\frac{1}{\ell_i}$ must agree with our \emph{cube colouring} from above on the following spanning subgraph, which we call the \emph{slide digraph}.

\begin{definition}[Slide digraph] 
For a cube partition $\U$, the digraph $D_\U$ on $\N$ is defined as follows: A pair $(m,n)$ of distinct integers $m\in U_{i_1,\dots,i_r}$ and $n\in U_{j_1,\dots, j_r}$ is an edge of $D_{\U}$ if 
\begin{itemize}
\item there is an index $k$ such that $i_k>j_k$ and $i_{k'}=j_{k'}$ for all $k'\neq k$ (cf.~Figure~\ref{fig: slide ladder}), or 
\item $i_k\le j_k$ for every $k\in [r]$ (cf. Figure \ref{fig: the slide}). 
\end{itemize}
\end{definition}

\begin{figure}[H]
\centering
\begin{tikzpicture}
\draw (-3,1) ellipse (1cm and 0.5cm) node {$U_{0,0}$};
\draw (0,1) ellipse (1cm and 0.5cm) node {$U_{0,1}$};
\draw (3,1) ellipse (1cm and 0.5cm) node {$U_{0,2}$};
\draw (-3,-1) ellipse (1cm and 0.5cm) node {$U_{1,0}$};
\draw (0,-1) ellipse (1cm and 0.5cm) node {$U_{1,1}$};
\draw (3,-1) ellipse (1cm and 0.5cm) node {$U_{1,2}$};

\draw[-{Latex[length=2mm,width=2mm]},green,thick] (-1,1) to (-2,1);
\draw[-{Latex[length=2mm,width=2mm]},green,thick] (2,1) to (1,1);
\draw[-{Latex[length=2mm,width=2mm]},green,thick] (2,1.3) to [bend right] (-2,1.3);

\draw[-{Latex[length=2mm,width=2mm]},green,thick] (-1,-1) to (-2,-1);
\draw[-{Latex[length=2mm,width=2mm]},green,thick] (2,-1) to (1,-1);
\draw[-{Latex[length=2mm,width=2mm]},green,thick] (2,-1.3) to [bend left] (-2,-1.3);

\draw[-{Latex[length=2mm,width=2mm]},red,thick] (-3,-0.5) to (-3,0.5);
\draw[-{Latex[length=2mm,width=2mm]},red,thick] (0,-0.5) to (0,0.5);
\draw[-{Latex[length=2mm,width=2mm]},red,thick] (3,-0.5) to (3,0.5);

\node[right,red] at (5,0.5) {1 = Red};
\node[right,green] at (5,0) {2 = Green};
\end{tikzpicture}
\caption{First type of edges of the slide digraph for $\ell_1=2$ and $\ell_2=3$.}
\label{fig: slide ladder}
\end{figure}
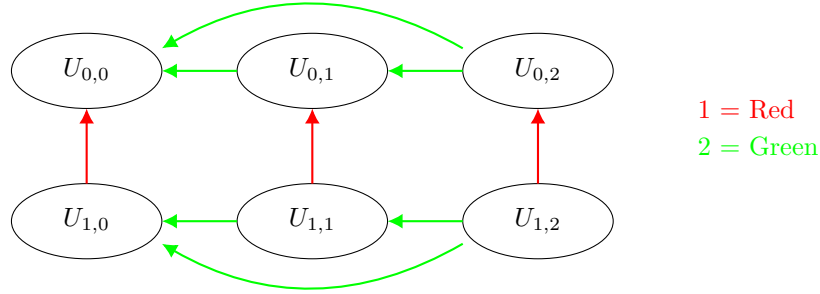

\begin{figure}[H]
\centering
\begin{tikzpicture}
\draw (-3,1) ellipse (1cm and 0.5cm) node {$U_{0,0}$};
\draw (0,1) ellipse (1cm and 0.5cm) node {$U_{0,1}$};
\draw (3,1) ellipse (1cm and 0.5cm) node {$U_{0,2}$};
\draw (-3,-1) ellipse (1cm and 0.5cm) node {$U_{1,0}$};
\draw (0,-1) ellipse (1cm and 0.5cm) node {$U_{1,1}$};
\draw (3,-1) ellipse (1cm and 0.5cm) node {$U_{1,2}$};

\draw[-{Latex[length=2mm,width=2mm]},blue,thick] (-2,1) to (-1,1);
\draw[-{Latex[length=2mm,width=2mm]},blue,thick] (1,1) to (2,1);
\draw[-{Latex[length=2mm,width=2mm]},blue,thick] (-2,1.3) to [bend left] (2,1.3);

\draw[-{Latex[length=2mm,width=2mm]},blue,thick] (-2,-1) to (-1,-1);
\draw[-{Latex[length=2mm,width=2mm]},blue,thick] (1,-1) to (2,-1);
\draw[-{Latex[length=2mm,width=2mm]},blue,thick] (-2,-1.3) to [bend right] (2,-1.3);

\draw[-{Latex[length=2mm,width=2mm]},blue,thick] (-3,0.5) to (-3,-0.5);
\draw[-{Latex[length=2mm,width=2mm]},blue,thick] (0,0.5) to (0,-0.5);
\draw[-{Latex[length=2mm,width=2mm]},blue,thick] (3,0.5) to (3,-0.5);

\draw[-{Latex[length=2mm,width=2mm]},blue,thick] (-3.6,0.7) to (-3.6,1.3);
\draw[-{Latex[length=2mm,width=2mm]},blue,thick] (-2.4,1.3) to (-2.4,0.7);
\draw[-{Latex[length=2mm,width=2mm]},blue,thick] (-2.4,-0.7) to (-2.4,-1.3);
\draw[-{Latex[length=2mm,width=2mm]},blue,thick] (-3.6,-1.3) to (-3.6,-0.7);

\draw[-{Latex[length=2mm,width=2mm]},blue,thick] (3.6,0.7) to (3.6,1.3);
\draw[-{Latex[length=2mm,width=2mm]},blue,thick] (2.4,1.3) to (2.4,0.7);
\draw[-{Latex[length=2mm,width=2mm]},blue,thick] (2.4,-0.7) to (2.4,-1.3);
\draw[-{Latex[length=2mm,width=2mm]},blue,thick] (3.6,-1.3) to (3.6,-0.7);

\draw[-{Latex[length=2mm,width=2mm]},blue,thick] (-0.6,0.7) to (-0.6,1.3);
\draw[-{Latex[length=2mm,width=2mm]},blue,thick] (0.6,1.3) to (0.6,0.7);
\draw[-{Latex[length=2mm,width=2mm]},blue,thick] (0.6,-0.7) to (0.6,-1.3);
\draw[-{Latex[length=2mm,width=2mm]},blue,thick] (-0.6,-1.3) to (-0.6,-0.7);

\draw[-{Latex[length=2mm,width=2mm]},blue,thick] (0.9,0.8) to (2.2,-0.7);
\draw[-{Latex[length=2mm,width=2mm]},blue,thick] (-2.2,0.7) to (-0.9,-0.8);

\draw[-{Latex[length=2mm,width=2mm]},blue,thick] (-2.1,0.8) to (2.1,-0.8);

\node[right,blue] at (5,-0.5) {3 = Blue};
\end{tikzpicture}
\caption{Second type of edges of the slide digraph for $\ell_1=2$ and $\ell_2=3$.}
\label{fig: the slide}
\end{figure}
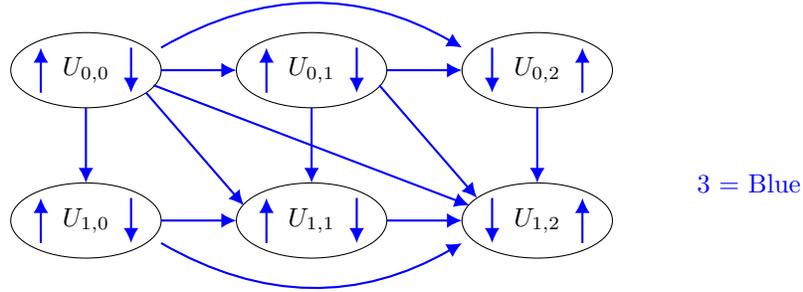

\begin{theorem}\label{Stability Thm}
Let $c\colon E(\K{\N})\rightarrow [r+1]$ be an edge-colouring of $\K{\N}$ for which there is no directed path of length $\ell_i$ in colour $i$ for any $i \in [r]$. Moreover, assume that every directed path in colour $r+1$ has upper density at most $\prod_{i\in [r]}\frac{1}{\ell_i}$. Then there exists a cube partition $\U$ of order $(\ell_1,\dots,\ell_r)$ of equally (upper-) dense partition classes and a finite set $F\subseteq \N$ such that $c_\U=c$ on $D_\U- F$. 
\end{theorem}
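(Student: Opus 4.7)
The plan is to extract the cube partition and its indexing directly from the colouring data, then verify that the resulting cube colouring agrees with $c$ off a finite exception set. First, apply Theorem~\ref{Louis Thm} to obtain a partition $\{V_1, \dots, V_M\}$ of $\N$, with $M := \prod_{i \in [r]} \ell_i$, into complete symmetric digraphs each monochromatic in colour $r+1$. Enumerating $V_s$ in natural order gives a directed colour-$(r+1)$ path of upper density $\bar{d}(V_s)$, so the hypothesis forces $\bar{d}(V_s) \le 1/M$. Since $\sum_{s} |V_s \cap \{1,\dots,n\}|/n = 1$ for every $n$, the trivial bound $\liminf_n a_s(n) \ge 1 - \sum_{s' \ne s} \limsup_n a_{s'}(n) \ge 1/M$ (with $a_s(n) := |V_s \cap \{1,\dots,n\}|/n$) then forces each $V_s$ to have limit density exactly $1/M$.

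The main combinatorial step is to show that for every ordered pair $(V_p, V_q)$ of distinct classes there is a unique colour $\chi_{p,q} \in [r+1]$ such that all but finitely many $[V_p, V_q]$-edges have colour $\chi_{p,q}$. For colour $r+1$: if both $[V_p, V_q]$ and $[V_q, V_p]$ had infinitely many colour-$(r+1)$ edges, splicing the natural colour-$(r+1)$ enumerations of $V_p$ and $V_q$ at such crossings would yield a colour-$(r+1)$ path of upper density approaching $\bar{d}(V_p) + \bar{d}(V_q) = 2/M$, contradicting the hypothesis. For each colour $k \in [r]$, define the colour-$k$ quotient digraph on the classes with an edge $V_p \to V_q$ whenever $[V_p, V_q]$ contains infinitely many colour-$k$ edges; a greedy choice of distinct representatives from the infinite classes lifts any directed walk of length $\ell_k$ in this quotient to a directed colour-$k$ path of length $\ell_k$ in $\K{\N}$, violating the hypothesis. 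Hence each colour-$k$ quotient is acyclic with longest walk $\le \ell_k - 1$, and combining this with the density constraint forces the majority property in each pair.

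Next, extract the cube indexing by applying Theorem~\ref{GHRV} to each colour-$k$ quotient ($k \in [r]$) to obtain a level partition $\{Q^{(k)}_0, \ldots, Q^{(k)}_{\ell_k - 1}\}$ of $\{V_1, \ldots, V_M\}$, and set $\Phi(V_s) := (t^{(1)}_s, \ldots, t^{(r)}_s)$ where $V_s \in Q^{(k)}_{t^{(k)}_s}$. Then $\Phi$ is a bijection onto $\prod_{i \in [r]} \{0, \ldots, \ell_i - 1\}$: two classes agreeing on all coordinates would have no colour-$k$ quotient edge between them for any $k \in [r]$, making both $[V_p, V_q]$ and $[V_q, V_p]$ cofinitely colour-$(r+1)$, contradicting the direction-asymmetry established above. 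Setting $U_I := \Phi^{-1}(I)$ yields the cube partition with classes of density $1/M$. A routine check using the GHRV levels and majority colours then confirms that $\chi_{p,q}$ coincides with the cube colouring $c_\U(V_p, V_q)$ on every edge of the slide digraph $D_\U$: for a first-type edge, $\Phi(V_p)$ and $\Phi(V_q)$ differ only in a single coordinate $k$ with $t^{(k)}_p > t^{(k)}_q$, forcing $\chi_{p,q} = k$; for a second-type edge the componentwise inequality $\Phi(V_p) \le \Phi(V_q)$ rules out $V_p \to V_q$ in every colour-$k$ quotient ($k \in [r]$), forcing $\chi_{p,q} = r+1$. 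Collecting the finitely many exceptional edges per pair into a finite vertex set $F$ completes the proof.

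The principal obstacle is the majority colour step, specifically ruling out the scenario that a pair $[V_p, V_q]$ carries infinitely many colour-$(r+1)$ edges alongside infinitely many colour-$k$ edges (for some $k \in [r]$); this cannot be derived from local constraints on the single pair alone and requires chaining colour-$k$ quotient walks through multiple classes while simultaneously exploiting the density hypothesis on colour-$(r+1)$ paths, since the path-length bound in each single colour and the density bound each control only one aspect of the structure. Making the greedy lifting procedure rigorous—finding representatives that both have the required incoming colour-$k$ edge from the previous representative and also some colour-$k$ outgoing edge into the next class—is another subtle point.
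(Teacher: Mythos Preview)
Your outline diverges from the paper's argument and, as you yourself flag at the end, the two ``obstacles'' you name are genuine gaps rather than routine details.

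First, the majority--colour assertion is simply false in general. The paper's own Example after Theorem~\ref{Stability Thm} shows that for $r\ge 2$ one can take the cube colouring, pick a non--slide pair $(U_i,U_j)$ with $|\{k:i_k>j_k\}|\ge 2$, and recolour the $[U_i,U_j]$--edges arbitrarily using colours from that set. The resulting colouring still satisfies all the hypotheses of the theorem, yet $[U_i,U_j]$ carries infinitely many edges of two different colours in $[r]$ and no colour is cofinal. So your ``main combinatorial step'' cannot be carried out as stated; at best it could hold for slide pairs, but you invoke it for all pairs.

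Second, the lifting of a walk of length $\ell_k$ in the colour--$k$ quotient to a colour--$k$ path in $\K{\N}$ does not follow from ``infinitely many colour--$k$ edges'' alone: one can arrange that the set of vertices in $V_q$ receiving a colour--$k$ edge from $V_p$ is disjoint from the set of vertices in $V_q$ sending a colour--$k$ edge onwards, so the greedy chaining stalls after one step. Consequently you have not shown that the colour--$k$ quotient is acyclic, and without acyclicity the longest--path levels you want to use for $\Phi$ need not form a proper colouring with the crucial monotonicity property (edge $V_p\to V_q$ forces level of $V_p$ strictly larger), which your first--type verification relies on.

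The paper sidesteps both problems by treating colour $r+1$ asymmetrically. It applies K\"onig's theorem only to colour--$(r+1)$ edges between classes, deletes a finite set so that each ordered pair either has \emph{no} colour--$(r+1)$ crossing edges or an infinite colour--$(r+1)$ matching, and then observes that a cycle among the ``infinite matching'' pairs would splice (using that the classes are complete in colour $r+1$) into a colour--$(r+1)$ path of density exceeding $\prod 1/\ell_i$. Lemma~\ref{Transitive Tournament Lemma} then converts this acyclicity into a spanning transitive tournament $T$ on the classes, whose lift $\vec{T}$ to $\K{\N}$ is genuinely acyclic and carries only colours in $[r]$. The cube indexing is obtained \emph{recursively}: one takes longest--path levels in colour $1$ within $\vec{T}$, then within each level takes longest--path levels in colour $2$, and so on. The acyclicity of $\vec{T}$ makes each stage well--defined, and Claims~4 and~5 then pin down the slide--digraph colours exactly (not merely cofinitely), with the finite exception set $F$ coming solely from the initial K\"onig deletion. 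The key idea you are missing is this passage through the transitive tournament on the quotient, which exploits that colour $r+1$ is complete inside each class.
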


For the proof, we first need the following lemma.

\begin{lemma}\label{Transitive Tournament Lemma}
For any $n \in \N$, a spanning subgraph $G\subseteq \K{n}$ contains a spanning transitive tournament if and only if $\K{n} - E(G) $ is acyclic. 
\end{lemma}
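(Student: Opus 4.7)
The plan is to prove both directions via topological orderings of finite acyclic digraphs, exploiting the fact that at each antiparallel pair $\{(u,v),(v,u)\}$ of $\vec{K}_n$, any edge not lying in $\vec{K}_n - E(G)$ must lie in $G$.

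For the forward direction, I would start from a spanning transitive tournament $T \subseteq G$, which corresponds to a linear order $v_1 \prec \cdots \prec v_n$ on $V(\vec{K}_n)$ with $E(T) = \{(v_i,v_j) : i<j\}$. Then $E(\vec{K}_n) \setminus E(T)$ consists exactly of the reverse edges $\{(v_j,v_i) : i<j\}$; these themselves form a transitive tournament with respect to the reversed order and so contain no directed cycle. Since $\vec{K}_n - E(G) \subseteq \vec{K}_n - E(T)$, acyclicity is inherited.

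For the converse, the key step is to choose a topological ordering $v_1, v_2, \dots, v_n$ of the finite acyclic digraph $\vec{K}_n - E(G)$, so that every edge of $\vec{K}_n - E(G)$ goes from a lower-indexed vertex to a higher-indexed one. Then for each pair $i<j$, the backward edge $(v_j,v_i)$ cannot lie in $\vec{K}_n - E(G)$ and hence belongs to $G$. The collection $\{(v_j,v_i) : 1 \le i < j \le n\}$ is therefore a spanning transitive tournament contained in $G$, corresponding to the reversed linear order $v_n \prec' v_{n-1} \prec' \cdots \prec' v_1$.

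I do not anticipate any genuine obstacle here: once one observes that $E(G)$ and $E(\vec{K}_n - E(G))$ partition $E(\vec{K}_n)$, the equivalence reduces to the existence of a topological sort of a finite DAG, an elementary fact which can in fact be extracted from Theorem~\ref{GHRV} applied to $\vec{K}_n - E(G)$ (the colour classes $U_0,\dots,U_k$ produced there, read off by longest-path length in a maximal acyclic subgraph, supply the required linear extension when $\vec{K}_n - E(G)$ is itself acyclic).
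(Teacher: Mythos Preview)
Your proof is correct. The forward direction is identical to the paper's. For the converse, the paper argues by induction on $n$: it picks a source $v$ of the acyclic digraph $\vec{K}_n - E(G)$, notes that all $n-1$ in-edges at $v$ must then lie in $G$, applies the induction hypothesis to $G-v$ to get a spanning transitive tournament $T$, and appends the edges $\{(w,v):w\in V(T)\}$. You instead invoke the existence of a topological ordering of the finite acyclic digraph $\vec{K}_n - E(G)$ as a black box and read off the tournament $\{(v_j,v_i):i<j\}\subseteq G$ in one stroke. These are really the same idea at different levels of abstraction---the paper's induction is exactly the standard proof that finite DAGs admit a topological sort---but your packaging is a bit cleaner and makes the correspondence with linear orders explicit. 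One small remark on your parenthetical: the colour classes $U_0,\dots,U_k$ from Theorem~\ref{GHRV} yield only a weak order (there may be several vertices in a single $U_i$), so to get a genuine linear extension you would still need to order arbitrarily within each class; citing the elementary topological-sort fact directly, as you do in the main argument, is the cleaner route.
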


\begin{proof}
If the spanning subgraph $G$ has a spanning transitive tournament $T$, then $\K{n}-E(G)$ is a subgraph of the transitive tournament $\K{n}-E(T)$ and hence acyclic. 

The converse is proved via induction on $n$. The base case is clear. Assume that the statement is true for $n-1$ and consider a spanning subgraph $G\subseteq \K{n}$ such that $\K{n}-E(G)$ is acyclic. Recall that every finite directed acyclic graph has a \emph{source}, i.e.\ a vertex of in-degree $0$. Fix a source $v$ in $\K{n}-G$ and claim that $v$ has in-degree $n-1$ in $G$: Indeed, since $v$ has in-degree $n-1$ in $\K{n}$ and is a source of $\K{n}-E(G)$, it follows that all these incoming edges must be contained in the subgraph $G$. By induction, the graph $G-v$ contains a transitive tournament $T$ spanning $G-v$. Then $E(T)\cup \{(w,v)\colon w\in T\}$ is the desired transitive spanning tournament contained in $G$. 
\end{proof}

\begin{proof}[Proof of Theorem~\ref{Stability Thm}]
Apply Theorem \ref{Louis Thm} to obtain a partition $\mathcal{U}$ of $\N$ into $\prod_{i\in [r]}\ell_i$ complete symmetric digraphs in colour $r+1$. Up to deleting finitely many vertices, these partition classes are the partition classes of the slide digraph that we are going to construct. Note that every such partition class $U \in \mathcal{U}$ has density precisely $\prod_{i\in [r]}\frac{1}{\ell_i}$ in $\N$.  

For $U,U'\in \mathcal{U}$, either there is an infinite matching of $[U,U']$-edges in colour $r+1$ or there is a finite set of vertices whose deletion leaves no $[U,U']$-edges of colour $r+1$ (by K\"onig's theorem).  In the latter case, delete such a finite set. Let $K$ be the complete symmetric digraph with vertex set $\mathcal{U}$ and let $G$ be the digraph with vertex set $\mathcal{U}$ and edge set $$E(G):= \{(U,U')\colon \text{every $[U,U']$-edge has colour in $[r]$}\}.$$
If there is a cycle in $K-E(G)$, this implies that we can construct a path of colour $r+1$ of density larger than $\prod_{i\in [r]}\frac{1}{\ell_i}$ in $\N$; so suppose not. By Lemma~\ref{Transitive Tournament Lemma}, it follows that $G$ has a spanning transitive tournament $T$.

Let $\vec{T}$ be the lift of $T$ to $\K{\N}$, i.e. $\vec{T}$ is the spanning subgraph of $\K{\N}$ which contains all edges $(m,n)$ where $m\in U$, $n\in U'$ and $(U,U')$ is an auxiliary edge in $E(T)$. Then $\vec{T}$ is also acyclic and spanning. Following the proof of Theorem \ref{Louis Thm} (but applied to $\vec{T}$ instead of all of $\K{\N}$), we define sets $W_{i_1,\dots, i_j}$ for all $j\in [r]$ and $0\leq i_j<\ell_j$ by recursively defining $W_{i_1,\dots, i_m}$ to consist of all vertices $w \in W_{i_1,\dots, i_{m-1}}$ such that the longest path in $\vec{T}[W_{i_1,\dots, i_{m-1}}]$ in colour $m$ starting at $w$ has length~$i_m$.

\begin{claim}{1}
The family $\{W_{i_1,\dots, i_j,0},\dots, W_{i_1,\dots, i_j,\ell_{j+1}-1}\}$ is a partition of $W_{i_1,\dots, i_j}$. 
\end{claim}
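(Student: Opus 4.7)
The plan is to view $\{W_{i_1,\dots,i_j,0},\dots,W_{i_1,\dots,i_j,\ell_{j+1}-1}\}$ as the family of level sets of a single well-defined function
\[
\ell \colon W_{i_1,\dots,i_j}\longrightarrow \{0,1,\dots,\ell_{j+1}-1\},
\]
where $\ell(w)$ is the length of the longest colour-$(j+1)$ directed path in the induced subgraph $\vec{T}[W_{i_1,\dots,i_j}]$ starting at $w$. Once I know $\ell$ is well-defined with the stated range and that $W_{i_1,\dots,i_j,k}=\ell^{-1}(k)$ for every $k$, the claim is immediate: the preimages of distinct values are disjoint and their union is the domain, so they partition $W_{i_1,\dots,i_j}$.

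First I would check that $\ell$ is well-defined. For any $w\in W_{i_1,\dots,i_j}$, the trivial one-vertex path at $w$ is a colour-$(j+1)$ path of length $0$ in $\vec{T}[W_{i_1,\dots,i_j}]$, so the set of lengths of colour-$(j+1)$ paths in $\vec{T}[W_{i_1,\dots,i_j}]$ starting at $w$ is non-empty. Next I would observe that any such path is in particular a colour-$(j+1)$ directed path in $\K{\N}$ (the induced subgraph inherits both its edges and their colours), so by the hypothesis of Theorem \ref{Stability Thm} its length is at most $\ell_{j+1}-1$. Hence this set of lengths is a non-empty subset of $\{0,\dots,\ell_{j+1}-1\}$, and therefore has a maximum, which I define to be $\ell(w)$.

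Finally, unwinding the recursive definition of $W_{i_1,\dots,i_j,k}$, it consists precisely of those $w\in W_{i_1,\dots,i_j}$ with $\ell(w)=k$. Thus $W_{i_1,\dots,i_j,k}=\ell^{-1}(k)$, and the partition claim follows.

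I do not expect a genuine obstacle here: this is essentially the elementary observation underlying the Gallai--Hasse--Roy--Vitaver theorem (cf. the proof of Theorem~\ref{GHRV}), applied inside the nested induced subgraphs $\vec{T}[W_{i_1,\dots,i_j}]$ rather than globally. The only small point to verify is that the length bound from the hypothesis transfers from $\K{\N}$ to these induced subgraphs, which is automatic because a colour-$(j+1)$ path in any induced subgraph of $\K{\N}$ is still a colour-$(j+1)$ path in $\K{\N}$.
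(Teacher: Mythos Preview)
Your proposal is correct and is exactly the paper's approach, only spelled out in more detail: the paper's entire proof of this claim is the single line ``Clear, because there is no path of length $\ell_{m+1}$ in colour $m+1$,'' which is precisely your observation that the level-set function $\ell$ takes values in $\{0,\dots,\ell_{j+1}-1\}$ by the global path-length hypothesis, so its fibres partition $W_{i_1,\dots,i_j}$.
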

\begin{proof}[Proof of Claim~1]\renewcommand{\qedsymbol}{$\Diamond$}
Clear, because there is no path of length $\ell_{m+1}$ in colour $m+1$.
\end{proof}

\begin{claim}{2}
If $w,w'\in W_{i_1,\dots, i_j}$ and $(w,w')\in E(\vec{T})$, then $c(w,w')\notin [j]$. 
\end{claim}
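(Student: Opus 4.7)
My plan is to argue by contradiction using the recursive definition of the $W$-sets, exactly mirroring the colour-class construction in the proof of Theorem~\ref{Louis Thm} and the argument in Theorem~\ref{GHRV}. Suppose $w,w'\in W_{i_1,\dots,i_j}$, $(w,w')\in E(\vec{T})$, and, for contradiction, that $c(w,w')=k$ for some $k\in[j]$. Since the sequence of sets is nested, $W_{i_1,\dots,i_j}\subseteq W_{i_1,\dots,i_k}\subseteq W_{i_1,\dots,i_{k-1}}$ (with the convention that $W_\emptyset=V(\vec{T})$ for $k=1$), so both $w$ and $w'$ lie in $W_{i_1,\dots,i_{k-1}}$ and in $W_{i_1,\dots,i_k}$.

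By the defining property of $W_{i_1,\dots,i_k}$, there exists a colour-$k$ directed path $P$ in $\vec{T}[W_{i_1,\dots,i_{k-1}}]$ starting at $w'$ of length exactly $i_k$. The plan is to prepend the edge $(w,w')$ to $P$ to obtain a colour-$k$ path of length $i_k+1$ starting at $w$, still entirely inside $W_{i_1,\dots,i_{k-1}}$; this contradicts the definition of $W_{i_1,\dots,i_k}$ (which forces the longest such path from $w$ to have length $i_k$).

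For this to work I must check that the concatenation is a simple path, i.e.\ that $w\notin V(P)$. This is where acyclicity of $\vec{T}$ is used: if $w$ appeared on $P$, then together with $(w,w')$ it would produce a (finite) directed cycle in $\vec{T}$, contradicting that $\vec{T}$ was obtained by lifting a transitive tournament $T$, and hence is itself acyclic. The edge $(w,w')$ also lies in $\vec{T}[W_{i_1,\dots,i_{k-1}}]$ because both endpoints are in $W_{i_1,\dots,i_{k-1}}$ and $(w,w')\in E(\vec{T})$.

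I expect no real obstacle here: the whole argument is a one-line application of the standard GHRV-style colouring recipe from Theorem~\ref{GHRV}, with acyclicity of $\vec{T}$ as the only nontrivial ingredient, and that has already been secured via Lemma~\ref{Transitive Tournament Lemma}. The only thing to be a little careful about is the boundary case $k=1$, where $W_{i_1,\dots,i_{k-1}}$ should be read as the whole vertex set of $\vec{T}$; the argument is identical.
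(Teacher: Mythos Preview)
Your proposal is correct and follows essentially the same argument as the paper: assume $c(w,w')=k\in[j]$, use the nesting $W_{i_1,\dots,i_j}\subseteq W_{i_1,\dots,i_k}\subseteq W_{i_1,\dots,i_{k-1}}$, take a longest colour-$k$ path from $w'$ inside the appropriate $W$-set, and prepend $(w,w')$ using acyclicity of $\vec{T}$ to contradict $w\in W_{i_1,\dots,i_k}$. If anything, your version is slightly more precise in working explicitly inside $\vec{T}[W_{i_1,\dots,i_{k-1}}]$ (which is what the recursive definition actually requires) and in handling the $k=1$ boundary case.
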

\begin{proof}[Proof of Claim~2]\renewcommand{\qedsymbol}{$\Diamond$}
Let $i_{j}<\ell_{j}$ and suppose for a contradiction that $w,w'\in W_{i_1,\dots,i_{j}}$, $(w,w')\in E(\vec{T})$ and $c(w,w')\in [j]$. Let us write $k:=c(w,w')$. By Claim~1, we have $w,w'\in W_{i_1,\dots,i_k}$. Take a longest path $P$ in $\vec{T}[W_{i_1,\dots, i_{k}}]$ in colour $k$ starting in $w'$. Since $\vec{T}$ is acyclic, we know that $wP$ is a path in $\vec{T}[W_{i_1,\dots, i_{k}}]$ in colour $k$ starting in $w$ contradicting that $w$ is contained in $W_{i_1,\dots, i_{k}}$. 
\end{proof}

So far, we have defined the partition $\mathcal{W}:=\{W_{i_1,\dots, i_r}\colon 0\le i_j<\ell_j \text{ for all } j\in[r]\}$ in terms of the acyclic spanning graph $\vec{T}$. It turns out, however, that we still arrive at the same partition as earlier, when we did this construction with respect to all of $\K{\N}$.

\begin{claim}{3}
We have $\mathcal{U}=\mathcal{W}$.
\end{claim}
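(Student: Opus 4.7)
The plan is to prove $\mathcal{U}=\mathcal{W}$ in two moves: first I will show that $\mathcal{W}$ refines $\mathcal{U}$ (every $\mathcal{W}$-class lies inside a single $\mathcal{U}$-class), and then upgrade this refinement to full equality by a cardinality argument using the upper bound from Claim~1.

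For the refinement, my starting point is the following structural observation about $\vec{T}$: since $\vec{T}$ is the lift of the tournament $T$ on $\mathcal{U}$, no $\vec{T}$-edge lies inside any single $\mathcal{U}$-class, whereas between any two distinct $\mathcal{U}$-classes $U\neq V$ exactly one of the ordered pairs $(U,V),(V,U)$ lies in $E(T)$, and in that case \emph{every} $\K{\N}$-edge from $U$ to $V$ in the chosen direction lies in $\vec{T}$. Consequently, a subset $S\subseteq\N$ contains no $\vec{T}$-edge if and only if $S$ is contained in a single $\mathcal{U}$-class. I then invoke Claim~2 with $j=r$: if $w,w'\in W_{i_1,\dots,i_r}$ were joined by some $(w,w')\in E(\vec{T})$, Claim~2 would force $c(w,w')\notin[r]$, contradicting that every edge of $\vec{T}$ has colour in $[r]$ (because $E(T)\subseteq E(G)$). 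Hence no $\vec{T}$-edge lies inside any $\mathcal{W}$-class, and by the dichotomy above each $W_{i_1,\dots,i_r}$ is contained in a single $\mathcal{U}$-class.

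To conclude, I use that every $U\in\mathcal{U}$ is non-empty (indeed has density $\prod_{i} 1/\ell_i>0$), so each of the $\prod_i\ell_i$ classes of $\mathcal{U}$ must contain at least one non-empty $\mathcal{W}$-class; this gives $|\mathcal{W}|\ge|\mathcal{U}|=\prod_i\ell_i$. Since Claim~1 provides the reverse inequality $|\mathcal{W}|\le\prod_i\ell_i$, equality holds, and this forces each $\mathcal{U}$-class to coincide with exactly one $\mathcal{W}$-class, i.e.\ $\mathcal{U}=\mathcal{W}$.

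The apparent obstacle is that edge colours of $\vec{T}$ between different $\mathcal{U}$-classes need not be uniform, so one cannot directly transfer longest-path information from $u\in U$ to another $u'\in U$. The plan above sidesteps this completely by tracking only the \emph{existence} of $\vec{T}$-edges and applying Claim~2, which I expect makes the actual proof very short.
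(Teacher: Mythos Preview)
Your proposal is correct and follows essentially the same approach as the paper: both arguments show that each $W\in\mathcal{W}$ meets at most one $U\in\mathcal{U}$ by observing that any two vertices from distinct $\mathcal{U}$-classes are joined by a $\vec{T}$-edge of colour in $[r]$, which Claim~2 (with $j=r$) forbids inside a single $\mathcal{W}$-class; the conclusion $\mathcal{U}=\mathcal{W}$ then follows from the cardinality comparison $|\mathcal{W}|\le\prod_i\ell_i=|\mathcal{U}|$ together with $\mathcal{W}$ being a partition refining $\mathcal{U}$. Your write-up is somewhat more explicit about the dichotomy and the counting, but the underlying argument is identical to the paper's.
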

\begin{proof}[Proof of Claim~3]\renewcommand{\qedsymbol}{$\Diamond$}
Each $W\in \mathcal{W}$ meets at most one partition class of $\mathcal{U}$. Indeed, suppose for a contradiction that $u\in U\cap W$ and $u'\in U'\cap W$ for different partition classes $U,U'\in \mathcal{U}$ and some $W\in \mathcal{W}$. Since $u\in U$ and $u'\in U'$ we know that either $(u,u')$ or $(u',u)$ is contained in $E(\vec{T})$, say $(u,u')$. Then $(u,u')$ has a colour in $[r]$, in violation of Claim 2. But then it follows from the fact that $\mathcal{W}$ is a partition with $|\mathcal{W}|\le |\mathcal{U}|$ that $\mathcal{U} = \mathcal{W}$.
\end{proof}

Consider the slide graph with regard to the partition $\mathcal{W}$. The following claim shows that the first type of edges in the definition of the slide graph have the right colour:

\begin{claim}{4}
Let $w\in W_{i_1,\dots, i_r}$ and $w'\in W_{j_1,\dots, j_r}$ where for some $k$ we have $i_k<j_k$ and $i_{k'}=j_{k'}$ for $k'\neq k$. Then $(w',w)$ is an edge of $\vec{T}$ and has colour $k$.
\end{claim}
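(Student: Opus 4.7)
My plan is to pin down both the direction and the colour of the unique $\vec{T}$-edge between $w$ and $w'$, combining Claims~2 and~3 with a longest-path extension argument in the spirit of Theorem~\ref{GHRV}.

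By Claim~3, $\mathcal{W}=\mathcal{U}$, so $w$ and $w'$ lie in distinct partition classes of $\mathcal{U}$; since $T$ is a spanning tournament on $\mathcal{U}$ and $\vec{T}$ is its lift to $\K{\N}$, exactly one of $(w,w')$ and $(w',w)$ is in $\vec{T}$, and by the definition of $G$ this edge has colour in $[r]$. The largest index $j$ with both $w, w'\in W_{i_1,\dots,i_j}$ is $j=k-1$ (since their $k$-th coordinates differ), so Claim~2 further narrows the colour to $\{k,k+1,\dots,r\}$.

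To conclude, I rule out every possibility other than $(w',w)$ with colour $k$ by contradiction. First, if $(w,w')$ were in $\vec{T}$ with colour $k$, then prepending it to a longest colour-$k$ path $P$ in $\vec{T}[W_{i_1,\dots,i_{k-1}}]$ starting at $w'$ --- which has length $j_k$ by the defining property of $W_{i_1,\dots,i_{k-1},j_k}$ --- would yield a colour-$k$ walk in $\vec{T}[W_{i_1,\dots,i_{k-1}}]$ from $w$ of length $j_k+1$. This walk is a simple path by the acyclicity of $\vec{T}$ (a repeated vertex $w=v_i$ on $P$ would close the cycle $w\to w'\to v_1\to\dots\to v_i=w$), and its length $j_k+1>i_k$ contradicts $w\in W_{i_1,\dots,i_{k-1},i_k}$. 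For the remaining possibilities, where the $\vec{T}$-edge has some colour $c\in\{k+1,\dots,r\}$ in either direction, I plan a similar splicing argument: combining the edge with a longest colour-$c$ out-path from one endpoint and a dual longest colour-$c$ in-path to the other endpoint (produced by a symmetric backward-longest-path partition at level $c$) yields a colour-$c$ path in $\vec{T}$ of length at least $\ell_c$, contradicting the hypothesis. The only surviving case is the claim.

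The main obstacle is executing the $c>k$ step: unlike the colour-$k$ case, where the contradiction occurs cleanly inside a single $W_{i_1,\dots,i_{k-1}}$, the edge crosses between distinct level-$(c-1)$ classes of $\mathcal{W}$, so the recursive definition of $W_{i_1,\dots,i_{c-1},i_c}$ does not yield an immediate contradiction. The resolution relies on combining forward and backward longest colour-$c$ path partitions, supported by the extremality that every class of $\mathcal{U}=\mathcal{W}$ has density exactly $\prod_{i\in[r]}\frac{1}{\ell_i}$.
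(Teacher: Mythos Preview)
Your handling of the direction when the colour is $k$ (the longest-path prepending argument) is correct and matches the paper. The case analysis is also correctly set up. The genuine gap is in the step for a colour $c\in\{k+1,\dots,r\}$: you only gesture at a ``backward longest-path partition'' without constructing it or showing it gives an in-path of the required length. Even if one repeats the whole construction with in-paths to produce a second partition $\mathcal{W}'$, Claim~3 would give $\mathcal{W}'=\mathcal{U}=\mathcal{W}$ only as \emph{unlabelled} partitions; nothing tells you that the $c$-th backward coordinate of $w$ equals $\ell_c-1-i_c$, which is exactly what your splicing argument would need. The appeal to ``extremality of the densities'' does not supply this.

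The paper closes this gap by a different device: induction on $r-k$. In the inductive step, if the $\vec{T}$-edge $\vec{e}$ between $w$ and $w'$ had some colour $k'>k$, one picks, for each $i>i_{k'}$, a vertex $w_i$ in the class obtained from $w$'s class by changing only the $k'$-th coordinate to $i$, and similarly vertices $w_j$ on the $w'$ side for $j<j_{k'}$. By the \emph{inductive hypothesis} (since $r-k'<r-k$), each consecutive pair $(w_{i+1},w_i)$ is a $\vec{T}$-edge of colour $k'$; stringing these together with $\vec{e}$ in the middle yields a colour-$k'$ path of length at least $\ell_{k'}$, a contradiction. Once the colour is pinned to $k$, the direction is settled by the same prepending argument you gave. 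The missing idea in your proposal is precisely this induction, which replaces the unavailable ``in-path'' information with already-established colour information about edges differing in a single higher coordinate.
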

\begin{proof}[Proof of Claim~4]\renewcommand{\qedsymbol}{$\Diamond$}
The claim is proved via induction on $r-k$. For the base case let $r-k=0$, i.e., $k=r$. 
By Claim~3 and the construction of $T$, either $(w,w')$ or $(w',w)$ is contained in $E(\vec{T})$. Suppose, for a contradiction, that $(w,w')\in E(\vec{T})$. 

By definition of $\mathcal{W}$, we find a path $P$ in $\vec{T}[W_{i_1,\dots,i_{r-1}}]$ of length $j_r$ in colour $r$ starting at $w'$. However, since $\vec{T}$ is acyclic, we see that $wP$ is a path in $\vec{T}[W_{i_1,\dots,i_{r-1}}]$ of length $j_r+1$ in colour $r$ starting at $w$, implying that $i_r \geq j_r+1$, contradicting our assumption. 

Now, assume inductively that that $r-k \ge 1$ and Claim~4 holds for integers less than $r-k$. Let $w\in W_{i_1,\dots, i_{r}}$ and $w'\in W_{j_1,\dots, j_r}$. Furthermore, assume that $i_k<j_k$ and $i_{k'}=j_{k'}$ if $k'\neq k$. Again, either $(w,w')$ or $(w',w)$ is contained in $E(\vec{T})$. Let $\vec{e}$ be the unique edge in $E(\vec{T})\cap \{(w,w'),(w',w)\}$. We first show that $\vec{e}$ has colour $k$. 

By Claim~2, we know the colour of $\vec{e}$ is at least $k$. Suppose for a contradiction that $\vec{e}$ has a colour $k'$ at least $k+1$.
\begin{itemize}
\item If $\vec{e}=(w,w')$, fix, for all $i> i_{k'}$, vertices $w_{i}\in W_{i_1,\dots,i_{k'-1} ,i,i_{k'+1},\dots , i_r}$ and, for all $j<j_{k'}$, vertices $w_{j}\in W_{j_1,\dots,j_{k'-1} ,j,j_{k'+1},\dots , j_r}$. Then define $Q$ as the path $$Q:=w_{\ell_{k'}-1}\dots w_{i_{k'}+1}\vec{e}w_{i_{k'}-1}\dots w_2w_1w_0.$$ (cf. Figure \ref{The path $Q$}.)
\item If $\vec{e}=(w',w)$, fix, for all $j> j_{k'}$, vertices $w_{j}\in W_{j_1,\dots,j_{k'-1} ,j,j_{k'+1},\dots , j_r}$ and, for all $i<i_{k'}$, vertices $w_{i}\in W_{i_1,\dots,i_{k'-1} ,i,i_{k'+1},\dots , i_r}$. Then define $Q$ as the path $$Q:=w_{\ell_{k'}-1}\dots w_{i_{k'}+1}\vec{e}w_{i_{k'}-1}\dots w_2w_1w_0.$$  
\end{itemize} 

\begin{figure}
\centering
\begin{tikzpicture}
\draw (-3,1) ellipse (1cm and 0.5cm);
\draw (0,1) ellipse (1cm and 0.5cm);
\draw (3,1) ellipse (1cm and 0.5cm);
\draw (6,1) ellipse (1cm and 0.5cm);

\draw (-3,-1) ellipse (1cm and 0.5cm);
\draw (0,-1) ellipse (1cm and 0.5cm);
\draw (3,-1) ellipse (1cm and 0.5cm);
\draw (6,-1) ellipse (1cm and 0.5cm);

\draw[-{Latex[length=2mm,width=2mm]},green,thick] (6,1) to (3,1);
\draw[-{Latex[length=2mm,width=2mm]},green,thick] (3,1) to (3,-1);
\draw[-{Latex[length=2mm,width=2mm]},green,thick] (3,-1) to (0,-1);
\draw[-{Latex[length=2mm,width=2mm]},green,thick] (0,-1) to (-3,-1);

\node[right,black] at (3.1,0.2) {$W_{i_1\dots ,i_k,\dots,2,\dots,i_r}$};
\node[right,black] at (6.1,0.2) {$W_{i_1\dots ,i_k,\dots,3,\dots,i_r}$};

\node[right,black] at (-2.9,-1.8) {$W_{i_1\dots ,j_k,\dots,0,\dots,i_r}$};
\node[right,black] at (-0.1,-1.8) {$W_{i_1\dots ,j_k,\dots,1,\dots,i_r}$};

\node[right,black] at (2.55,0) {$\vec{e}$}; 

\end{tikzpicture}
\caption{The path $Q$ (green) for $\ell_{k'}=4$}\label{The path $Q$}
\end{figure}
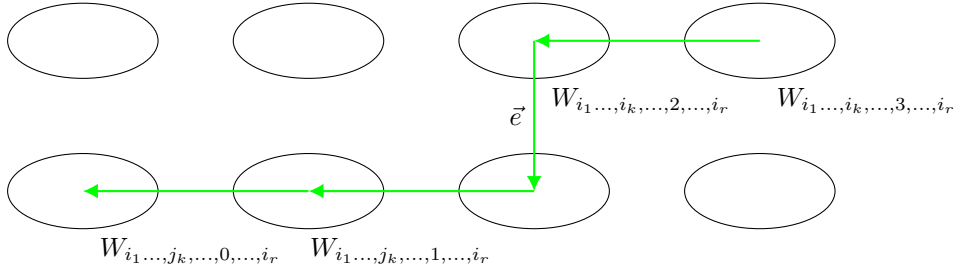
In either case, it follows from our inductive hypothesis that $Q$ is a monochromatic path in colour $k'$. However, $Q$ now has length $\ell_{k'}$, a contradiction.

Finally, to show that $\vec{e}=(w',w) \in E(\vec{T})$, we may now argue as in the base case: By construction of $\mathcal{W}$, find a longest path $P$ starting at $w'$ in colour $k$ of length $j_k$ in $\vec{T}[W_{i_1,\dots, i_{k-1}}]$. If $\vec{e}=(w,w')$, then $\vec{e}P$ is a path starting at $w$ in colour $k$ in $\vec{T}[W_{i_1,\dots, i_{k-1}}]$ of length $j_k+1$, forcing that $i_k \geq j_k + 1$, a contradiction.
\end{proof}

The following claim completes the proof:

\begin{claim}{5}
Let $w\in W_{i_1,\dots,i_r}$ and $w'\in W_{j_1,\dots,j_r}$ with $i_k\le j_k$ for every $k\in {r}$. Then $(w,w')$ has colour $r+1$.  
\end{claim}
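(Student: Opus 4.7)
The plan is to argue by contradiction: assume that the edge $(w,w')$ has some colour $c \in [r]$, and construct a monochromatic directed path in colour $c$ of length at least $\ell_c$, which is forbidden by hypothesis.

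The degenerate subcase $i_k = j_k$ for every $k \in [r]$ is immediate: then $w$ and $w'$ lie in the same class of $\mathcal{W}$, which by Claim~3 equals some $U \in \mathcal{U}$ and is hence a monochromatic complete symmetric digraph in colour $r+1$. So I may assume that at least one strict inequality $i_{k_0} < j_{k_0}$ holds.

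The idea is to extend the edge $(w,w')$ on both sides by chains of colour-$c$ edges supplied by Claim~4. I pick auxiliary vertices
\[
u_m \in W_{i_1,\dots,i_{c-1},\,m,\,i_{c+1},\dots,i_r} \quad \text{for } i_c + 1 \le m \le \ell_c - 1,
\]
\[
v_m \in W_{j_1,\dots,j_{c-1},\,m,\,j_{c+1},\dots,j_r} \quad \text{for } 0 \le m \le j_c - 1,
\]
chosen pairwise distinct and distinct from $w$ and $w'$. This is always possible: each $W$-class has positive upper density $\prod_i 1/\ell_i$ and is therefore infinite, while only finitely many distinctness constraints are imposed. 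Now consider the sequence
\[
P := u_{\ell_c - 1}, \dots, u_{i_c + 1}, w, w', v_{j_c - 1}, \dots, v_0.
\]
Every consecutive pair of $P$ other than $(w, w')$ consists of two vertices whose classes differ only in the $c$-th coordinate, with the tail's coordinate exceeding the head's by exactly $1$. Claim~4 therefore gives each such edge colour $c$, and $(w,w')$ has colour $c$ by assumption; hence $P$ is a directed colour-$c$ path. Its length is $(\ell_c - 1 - i_c) + 1 + j_c = \ell_c + (j_c - i_c) \ge \ell_c$, contradicting the hypothesis.

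The main subtlety I anticipate is checking distinctness of the auxiliary vertices: when $i_c < j_c$ and the remaining coordinates happen to satisfy $(i_{k'})_{k' \neq c} = (j_{k'})_{k' \neq c}$, the vertices $u_m$ and $v_m$ for the same index $m$ lie in a common class, so they must be picked apart explicitly using the infinitude of that class. All other pairs are automatically in distinct classes, and in the complementary subcase $i_c = j_c$ (where necessarily $i_{k'} < j_{k'}$ for some $k' \neq c$) this issue does not arise at all. Once distinctness is secured, the edge count immediately yields the contradiction.
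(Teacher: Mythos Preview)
Your proof is correct and follows essentially the same approach as the paper's: assume the edge has some colour $k'\in[r]$, then use Claim~4 to extend it on both sides along the $k'$-th coordinate axis into a colour-$k'$ path of length $\ell_{k'}+(j_{k'}-i_{k'})\ge \ell_{k'}$. Your treatment of distinctness and of the degenerate case $(i_k)=(j_k)$ is in fact slightly more explicit than the paper's.
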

\begin{proof}[Proof of Claim~5]\renewcommand{\qedsymbol}{$\Diamond$}
Suppose for a contradiction, that $(w,w')$ has a colour $k'\in [r]$. Fix, for all $i> i_{k'}$, vertices $w_{i}\in W_{i_1,\dots,i_{k'-1} ,i,i_{k'+1},\dots , i_r}\backslash\{w,w'\}$ and, for all $j<j_{k'}$, vertices $w_{j}\in W_{j_1,\dots,j_{k'-1} ,j,j_{k'+1},\dots , j_r} \backslash \{w,w'\}$ (all distinct). By Claim 4, the path $$w_{\ell_{k'}-1}\dots w_{i_{k'}+1}\vec{e}w_{i_{k'}-1}\dots w_2w_1w_0$$ has colour $k'$ and since $i_k\le j_k$ it has length at least $\ell_{k'}$ (contradiction).
\end{proof}
With this final claim established, the proof is complete.
\end{proof}

Since the slide graph is just the complete symmetric digraph on $\N$ for $r=1$, we obtain the stability result of Guggiari in \cite[Theorem~1.4]{G17} as a corollary: 

\begin{cor}[{cf. \cite[Theorem~1.4]{G17}}]
Take any $2$-colouring of $\K{\N}$ in which there are no monochromatic directed paths of length $\ell$ in colour $1$ and every monochromatic directed path in colour $2$ has upper density at most $\frac{1}{\ell}$. Then there exists a finite set of vertices $U$ such that the $2$-colouring induced on $\N\backslash U$ is isomorphic to the cube colouring on $\K{\N}$.
\end{cor}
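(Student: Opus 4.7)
The plan is to deduce this corollary from Theorem~\ref{Stability Thm} applied with $r=1$ and $\ell_1=\ell$. That theorem yields a cube partition $\U=(U_0,\dots,U_{\ell-1})$ of $\N$ with equally upper-dense classes, together with a finite set $F\subseteq\N$ such that $c_\U = c$ on $D_\U - F$.

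The key observation is that for $r=1$ the slide digraph $D_\U$ is simply the whole of $\K{\N}$. Indeed, for any edge $(m,n)\in E(\K{\N})$ with $m\in U_i$ and $n\in U_j$, either $i>j$ (so $(m,n)$ is an edge of $D_\U$ of the first type) or $i\le j$ (so it is of the second type); either way $(m,n)\in E(D_\U)$. Consequently $c$ agrees with $c_\U$ on all edges of $\K{\N\setminus F}$, so the restriction of $c$ to $\N\setminus F$ coincides with the cube colouring $c_{\U'}$ for the restricted cube partition $\U':=(U_i\setminus F\colon 0\le i<\ell)$, whose classes remain equally upper-dense since removing a finite set does not affect upper density.

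Setting $U:=F$, it remains to identify $(\K{\N\setminus F}, c_{\U'})$ with a cube colouring on $\K{\N}$. Fix any cube partition $\V=(V_0,\dots,V_{\ell-1})$ of $\N$ into equally upper-dense classes, for instance the residue classes modulo $\ell$, and let $\varphi\colon\N\setminus F\to\N$ be any bijection sending each $U_i\setminus F$ onto $V_i$. Then $\varphi$ preserves the cube colouring by construction, yielding the desired isomorphism of edge-coloured digraphs $(\K{\N\setminus F},\, c|_{\N\setminus F})\cong (\K{\N},\, c_\V)$.

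There is no real obstacle here: the corollary is essentially a clean unpacking of Theorem~\ref{Stability Thm} once one recognises that the slide digraph degenerates to the full $\K{\N}$ when $r=1$. The only mild subtlety is the slightly ambiguous phrase ``the cube colouring on $\K{\N}$'', which is resolved by the remark that all cube colourings of order $(\ell)$ with equally upper-dense classes are mutually isomorphic via the same argument used to build $\varphi$.
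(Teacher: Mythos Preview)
Your proposal is correct and follows exactly the paper's approach: the paper simply remarks that ``the slide graph is just the complete symmetric digraph on $\N$ for $r=1$'' and deduces the corollary from Theorem~\ref{Stability Thm}. Your write-up fills in the details (including the isomorphism at the end) that the paper leaves implicit, but the idea is identical.
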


Unless $r=1$, there are still edges whose colours we did not specify---those which are not part of the slide graph. In fact, the colours of these edges can vary depending on the choice of the colouring:

\begin{example}
Let $r\ge 2$ and $\ell_1,\dots,\ell_r$ positive integers. Consider the cube colouring $c_\U$ and the slide graph $D_\U$ on a cube partition $\U$ of order $(\ell_1,\dots, \ell_r)$ with equally (upper-) dense partition classes. Fix $U_{i_1,\dots, i_r}$ and $U_{j_1,\dots ,j_r}$ such that the $[U_{i_1,\dots, i_r}, U_{j_1,\dots j_r}]$ edges are not part of $D_\U$. Colour the $[U_{i_1,\dots, i_r}, U_{j_1,\dots ,j_r}]$-edges with arbitrary colours from $\{k\in [r]\colon i_k>j_k\}$ and all the other edges as in the cube colouring. Almost the same proof as in Theorem \ref{thm_cubecolouringworks} shows that there is no directed path of length $\ell_i$ in colour $i$ for any $i\in [r]$ and that every directed monochromatic path of colour $r+1$ has upper density at most $\prod_{i\in [r]}\frac{1}{\ell_i}$.
\end{example}

However, we observe (using the terminology of the proof of Theorem~\ref{Stability Thm}):
\begin{itemize}
\item If $i_k\le j_k$, then no $[W_{i_1,\dots,i_r}, W_{j_1,\dots,j_r}]$-edge has colour $k$ (cf. the proof of Claim 5).
\item If $i_k>j_k$ for every $k\in [r]$, then there is a finite set of vertices $F$ such that all $[(W_{j_1,\dots,j_r}\backslash F), (W_{i_1,\dots,i_r}\backslash F)]$-edges have a colour in $[r]$. 
\end{itemize}

\section{Acknowledgments}

The authors thank Paul McKenney for some productive discussions.

\end{document}